\newtheorem{theorem}{Theorem}
\newtheorem{proposition}{Proposition}
\newtheorem{corollary}{Corollary}
\theoremstyle{definition}
\definecolor{orcidlogocol}{HTML}{A6CE39}
\tikzset{
	orcidlogo/.pic={
		\fill[orcidlogocol] svg{M256,128c0,70.7-57.3,128-128,128C57.3,256,0,198.7,0,128C0,57.3,57.3,0,128,0C198.7,0,256,57.3,256,128z};
		\fill[white] svg{M86.3,186.2H70.9V79.1h15.4v48.4V186.2z}
		svg{M108.9,79.1h41.6c39.6,0,57,28.3,57,53.6c0,27.5-21.5,53.6-56.8,53.6h-41.8V79.1z M124.3,172.4h24.5c34.9,0,42.9-26.5,42.9-39.7c0-21.5-13.7-39.7-43.7-39.7h-23.7V172.4z}
		svg{M88.7,56.8c0,5.5-4.5,10.1-10.1,10.1c-5.6,0-10.1-4.6-10.1-10.1c0-5.6,4.5-10.1,10.1-10.1C84.2,46.7,88.7,51.3,88.7,56.8z};
	}
}
\newcommand\orcidicon[1]{\href{https://orcid.org/#1}{\mbox{\scalerel*{
				\begin{tikzpicture}[yscale=-1,transform shape]
				\pic{orcidlogo};
				\end{tikzpicture}
			}{|}}}}
\newcommand{\tr}{\text{tr}}
\newcommand{\Diag}{\text{Diag}}
\newcommand{\diag}{\text{diag}}
\newcommand{\rank}{\text{rank}}
\newcommand{\aff}{\text{aff}}
\newcommand{\conv}{\operatorname{conv}}
\DeclareMathOperator*{\argmin}{arg\,min}
\DeclarePairedDelimiter{\set}{\{}{\}}
\DeclarePairedDelimiter{\roundbr}{(}{)}
\newcommand{\ones}{\ensuremath{\mathbf{1}}}
\newcommand{\zeros}{\ensuremath{\mathbf{0}}}
\newcommand{\mat}[1]{\ensuremath{\mathbf{#1}}}
\newcommand{\mattran}[1]{\ensuremath{{#1}^\top}}
\newcommand{\indvec}[1]{\mathbb{1}_{#1}}
\newcommand{\R}{\mathbb{R}}
\newcommand{\algconnPn}{\ensuremath{2\left(1-\cos\left( \frac \pi n \right)\right)}}
\newcommand{\Ylarge}{\ensuremath{\widetilde{Y}}}
\newcommand{\Qlarge}{\ensuremath{\widetilde{Q}}}
\newcommand{\revision}[1]{{#1}}
\begin{document}

\title{Spanning and Splitting: Integer Semidefinite Programming for the Quadratic Minimum Spanning Tree Problem}

\author{
Frank de Meijer\footnote{Delft Institute of Applied Mathematics, Delft University of Technology, Mekelweg 4, 2628 CD Delft, The Netherlands,
\href{mailto:f.j.j.demeijer@tudelft.nl}{f.j.j.demeijer@tudelft.nl}}~\footnote{Corresponding Author:\href{mailto:f.j.j.demeijer@tudelft.nl}{f.j.j.demeijer@tudelft.nl}}~\orcidicon{0000-0002-1910-0699} 
\and
Melanie Siebenhofer\footnote{Institut f\"ur Mathematik, Alpen-Adria-Universit\"at Klagenfurt, Universit\"atstra{\ss}e 65-67, 9020 Klagenfurt, \href{mailto:melanie.siebenhofer@aau.at}{melanie.siebenhofer@aau.at}, \href{mailto:angelika.wiegele@aau.at}{angelika.wiegele@aau.at}}~\footnote{This research was funded in part by the Austrian Science Fund (FWF) [10.55776/DOC78]. For open access purposes, the authors have applied a CC BY public copyright license to any author-accepted manuscript version arising from this submission.}~\orcidicon{0000-0002-9101-834X}
\and 
Renata Sotirov\footnote{Tilburg University, Department of Econometrics \& Operations Research, CentER, 5000 LE Tilburg, 
\href{mailto:R.Sotirov@tilburguniversity.edu}{r.sotirov@tilburguniversity.edu}}~\orcidicon{0000-0002-3298-7255} 
\and Angelika Wiegele$^{\ddagger\S}$\footnote{Universität zu Köln, Weyertal 86--90, 50931 Köln, Germany}~\orcidicon{0000-0003-1670-7951}
}

\date{ }

\maketitle

\begin{abstract}
  In the quadratic minimum spanning tree problem (QMSTP) one wants to find the minimizer of a quadratic function over all possible spanning trees of a graph.
  We {present a formulation of the QMSTP as a mixed-integer semidefinite
  program} exploiting the algebraic connectivity of a graph. Based on
  {this formulation}, we derive a doubly nonnegative
  relaxation for the QMSTP and investigate classes of valid inequalities to
  strengthen the relaxation using the Chvátal-Gomory procedure for
  mixed-integer conic programming.
  
  Solving the resulting relaxations is out of reach for off-the-shelf
  software. We therefore develop and implement a version of the
  Peaceman-Rachford splitting method that allows to compute the
  new bounds for graphs from the literature.
  The computational results demonstrate that our bounds significantly
  improve over existing bounds from the literature in both quality and computation time, in particular for
  graphs with more than 30~vertices.
  
  This work is further evidence that semidefinite programming is a
  valuable tool to obtain high-quality bounds for problems in
  combinatorial optimization, in particular for those that can be
  modelled as a quadratic problem.  
\end{abstract}

\textit{Keywords:} Combinatorial Optimization, Spanning Trees, Integer Semidefinite Programming, Algebraic Connectivity, Projection Methods

\section{Introduction}

The quadratic minimum spanning tree problem (QMSTP) is the problem of finding a spanning tree of a connected, undirected graph such that the sum of interaction costs over all pairs of edges in the tree is minimized.
The QMSTP was introduced by~\citet{AssadXu} in~1992. 
The adjacent-only  quadratic minimum spanning tree problem (AQMSTP),
that is, the QMSTP where the interaction costs of all non-adjacent edge pairs are assumed to be zero, is also introduced in \citep{AssadXu}.
Assad and Xu proved that both the QMSTP and AQMSTP are strongly ${\mathcal NP}$-hard problems. Interestingly, the QMSTP remains ${\mathcal NP}$-hard even when the cost matrix is of rank one \citep{Punnen2001}.

There are many existing variants of the QMSTP problem, such as the minimum spanning tree problem with conflict pairs,  the quadratic bottleneck spanning tree problem, and the bottleneck spanning tree problem with conflict pairs.  For a description of those problems, see e.g.,  \'Custi\'c et al.~\citep{CUSTIC201873}.
 The QMSTP has various applications in telecommunication,  transportation, energy and hydraulic
networks, see e.g., \citep{AssadXu,CHIANG2007734,Chou1973AUA}.

There is a lot of research on lower-bounding approaches and exact algorithms for the QMSTP. The majority of lower bounding approaches for the QMSTP may be classified into Gilmore-Lawler (GL) type bounds \citep{AssadXu,Cordone2012SolvingTQ,ONCAN20101762,Rostami2015LowerBF} and
reformulation linearization technique (RLT) based bounds \citep{PEREIRA2015149,Rostami2015LowerBF}.
The GL procedure is a well-known approach to construct lower bounds for quadratic binary optimization problems, see e.g.,~\citep{Gilmore,Lawler}.
The RLT is a method to derive a hierarchy of convex approximations of mixed-integer programming problems~\citep{sherali2013reformulation} where integer variables are binary.
Lower bounding approaches based on an extended formulation of the minimum spanning tree problem are derived in~\citep{Sotirov2024}.
For an overview of the above-mentioned lower bounding approaches and their comparison, see e.g.,~\citep{Sotirov2024}.
Semidefinite programming (SDP) lower bounds for the QMSTP are considered in~\citep{GUIMARAES202046}. SDP bounds incorporated in a branch-and-bound algorithm provide the best exact solution approach for the problem up to date~\citep{GUIMARAES202046}.  Different exact approaches for solving the QMSTP are considered in~\citep{AssadXu,Cordone2012SolvingTQ,PEREIRA2015149,Pereira2015BranchandcutAB}. For a comparison of various heuristic approaches for solving the QMSTP, see~\citet{PalubeckisEtAl}.

In this paper, we derive a compact mixed-integer semidefinite (MISDP) formulation for the QMSTP by exploiting the algebraic connectivity of a tree. 
Algebraic connectivity was also exploited in~\citep{Cvetkovic, de2022chv} to derive ISDP  formulations for the traveling salesman problem (TSP) and the quadratic TSP, respectively.
 This formulation contributes to the recent development in mixed-integer semidefinite programming, revealing that many
hard combinatorial optimization problems have compact MISDP formulations~\citep{de2023misdp}. 
We prove that the continuous relaxation of our MISDP formulation is not dominated by the continuous relaxation of the cut-set QMSTP formulation, and vice versa.
Further, we  derive several classes of valid inequalities for our MISDPs by exploiting the Chv\'atal-Gomory  (CG) procedure for mixed-integer conic  programming~\citep{CezikIyengar,de2022chv}. 
In particular, we show that the classical cut-set constraints and the first-level RLT constraints are CG cuts. Although these cuts have been considered before for the QMSTP, obtaining them as CG cuts provides a measure for how well the continuous relaxation of our MISDP formulation describes the convex hull of feasible solution vectors.  
The cut-set constraints are derived from the linear matrix inequality (LMI) that is related to the algebraic connectivity of a tree. The RLT-type constraints are derived using two LMIs from the MISDP formulation of the QMSTP.

We exploit our MISDP formulation for the QMSTP to derive a doubly nonnegative (DNN) relaxation of the problem. DNN relaxations  have matrix variables that are both positive semidefinite and nonnegative.
Our DNN relaxation for the MISDP does not include the connectivity constraint in the form of a linear matrix inequality, because that constraint has only a small impact on the bound. Moreover,
preliminary computational results show that the cut-set constraints also have a small impact on the quality of our doubly nonnegative  relaxation of the QMSTP. However, the RLT-type constraints improve the DNN bound. Therefore, we add  RLT-type constraints to the DNN relaxation of the QMSTP.
Although our relaxation is theoretically dominated by the relaxation of~\cite{GUIMARAES202046}, we carefully selected which constraints to include in the relaxation in order to achieve an optimal balance between solution quality and computational efficiency.
The resulting relaxation has a large number of constraints, and it is difficult to solve using state-of-the-art interior point methods.
Therefore, we design a version of the Peaceman-Rachford splitting method (PRSM) that is able to handle a large number of cutting-planes efficiently. In particular, the PRSM algorithm is adding violated RLT-type inequalities iteratively while using warm-starts.
The computational results show that our bounds for the QMSTP outperform bounds from the literature in quality, as well as in computational time required to obtain them.  Interestingly, this is true even for the bounds that are theoretically stronger than ours, i.e.,  the ones of~\cite{GUIMARAES202046}. We expect that this has to do with the fact that the stronger relaxation of~\cite{GUIMARAES202046} is hard to solve to high precision, whereas our bounds can be computed with high accuracy, resulting in improved bounds.
 Hence, our approach shows significant improvement over other methods from the literature, particularly for larger instances, specifically, for graphs with more than 30 vertices.

\subsection*{Notation}
The set of $n\times n$ real symmetric matrices is denoted by ${\mathcal S}^n$.
The space of symmetric matrices is considered with the trace inner product, which for any $X, Y \in {\mathcal S}^{n}$ is defined as $\langle X,Y \rangle \coloneqq \tr (XY)$.
The associated norm is the Frobenius norm  $\| X\|_F \coloneqq \sqrt{\tr (XX)}$.
The cone of symmetric positive semidefinite matrices  of order $n$ is defined as ${\mathcal S}_+^n \coloneqq \set{X \in  {\mathcal S}^n: X\succeq \mathbf{0}}$.
We order the eigenvalues of $X\in {\mathcal S}^n$ as follows $\lambda_1(X) \leq \cdots \leq  \lambda_n(X)$.  If it is clear from the context to which matrix the eigenvalues relate, we denote eigenvalues by $\lambda_i$.
 The operator $\diag \colon \mathbb{R}^{n \times n} \rightarrow \mathbb{R}^n$ maps a square matrix to a vector consisting of its diagonal elements.
 The adjoint operator of $\diag$ is denoted by $\Diag \colon \mathbb{R}^n \rightarrow \mathbb{R}^{n \times n}$.

We denote by ${\mathbf 1}_n$ the vector of all ones of length $n$, and define ${\mathbf J}_n \coloneqq {\mathbf 1}_n {\mathbf 1}_n^\top$. The indicator vector of $S\subseteq V$ is  denoted by $\indvec{S}.$
The all-zero matrix of order $n$ is denoted by ${\mathbf 0}_n$. We use ${\mathbf I}_n$ to denote the identity matrix of order $n$, while its $i$-th column is given by ${\mathbf u}_i$. In case the dimension of ${\mathbf 1}_n$, ${\mathbf 0}_n$, ${\mathbf J}_n$ and $\mathbf{I}_n$ is clear from the context, we omit the subscript.

We define the $n$-simplex as $\Delta_n \coloneqq \set{x \in \R^p : x \geq \zeros,\ \sum_{i=1}^p x_i = n}$ 
and the capped $n$-simplex as~$\bar{\Delta}_n \coloneqq \set{x \in \R^p : \zeros \leq x \leq \ones,\ \sum_{i=1}^p x_i = n}$.
By~$\mathcal{P}_{\mathcal{M}}$ we denote the projection operator onto the set~$\mathcal{M}$. We use $[n]$ to denote the set of integers $\{1,\ldots,n\}.$ { Throughout the paper, we use the (graph) parameters $\alpha$ and $\beta$, which are defined as $\beta = \algconnPn$ and $\alpha = \frac \beta n$. }

Given a subset $S \subseteq V$ of vertices in a graph $G=(V,E)$,  we denote the set of edges with both endpoints in $S$ by $E(S)\coloneqq \{\{i,j\} \in E~:~i,j \in S \}$
and the cut induced by $S$ by ${\delta(S)} \coloneqq \big\{ \{i,j\} \in E ~ : ~ i \in S, j\notin S \big\}$. 
{The Laplacian matrix of a graph $G$ with $n$ vertices and the adjacency matrix $A$ is defined as $L\coloneqq \Diag(A{\mathbf 1}_n) - A $.}

\section{The Quadratic Minimum Spanning Tree Problem} \label{TheQMST}

In this section, we formally introduce the QMSTP. Let  $G=(V,E)$ be a connected, undirected graph with $n=|V|$ vertices and $m=|E|$ edges. Let $ Q = (q_{ef}) \in {\mathcal S}^{m} $  be a matrix of interaction costs between edges of $G$, where $q_{ee}$ represents the cost of edge $e$.

The QMSTP can be formulated as the following binary quadratic programming problem:
\begin{align*} 
\min\limits_{ x \in \mathcal{T}} ~ \sum_{e \in E} \sum_{f \in E} q_{ef} x_e x_f,
\end{align*}
where $\mathcal{T}$ denotes the set of all spanning trees in $G$. Each spanning tree in $\mathcal{T}$ is represented by its incidence vector of length $m$, and therefore
\begin{align}\label{T}
{\mathcal{T}} \coloneqq \bigg \{ x \in \{0,1\}^{m}  ~: ~ \sum_{e\in E} x_e = n-1, ~ \sum_{
e \in {\delta(S)}} x_e \geq 1, ~\forall S\subsetneq V, ~ S\not= \emptyset \bigg \}.
\end{align} 
The constraints of the type 
\begin{align}\label{constr:cut-set}
\sum_{
e \in {\delta(S)}} x_e \geq 1
\end{align}
are known as
the   {\it cut-set constraints}, and they ensure connectivity of a subgraph from $\mathcal{T}$.
If  $Q$ is a diagonal matrix then the QMSTP reduces to the minimum spanning tree problem that is solvable in polynomial time~\citep{Kruskal1956,Prim1957}.

Let us now fix an ordering for the edges $E=\{ e_1,\ldots, e_m \}$.
For $x\in {\mathcal T}$ define $Y\coloneqq (y_{ef}) \in  {\mathcal S}^m$ such that $y_{ef}=1$ if $x_e=1$ and $x_f=1$, and $y_{ef}=0$ otherwise. 
Then, the  QMSTP can be formulated as the following mixed-integer programming  problem, see~\citep{AssadXu}:
\begin{align}\label{eq:exactFormulation-AssadXu}
\begin{aligned} 
 \min\  & \langle Q,Y\rangle  \\
	\text{s.t. } & \diag(Y)=x, \,\,  Y\ones_m  = (n-1)x &&  \\	
	& \mat 0 \leq Y  \leq \mat J_m, \,\, Y\in {\mathcal S}^m, \,\, x \in \mathcal{T}.
\end{aligned}
	\end{align}
 One can verify that the constraints, in combination with the binarity of $x$, are sufficient to obtain the coupling
between  $Y$ and $x$.  {This result was proven in~\citep{AssadXu}, and this connection between the vector of variables $x$ and 
the matrix variable $Y$ is also exploited in this paper.}
Note that each row in $Y$ is an incidence vector of a tree. 
We refer to  \eqref{eq:exactFormulation-AssadXu} as the {\it cut-set} formulation of the QMSTP.

\section{MISDP formulation for the QMSTP}

\citet{Fiedler1973} defined the algebraic connectivity, $a(G)$, of a graph~$G$ as the second smallest eigenvalue of the Laplacian matrix of the graph. 
It is well-known that the algebraic connectivity is greater than zero if and only if $G$ is a connected graph.
In this section, we will exploit the algebraic connectivity of a tree to derive a compact MISDP formulation of the QMSTP. Moreover, we derive two classes of cutting planes resulting from this MISDP formulation, that will be exploited later on.
\subsection{Compact MISDP formulation for the QMSTP}

We start by showing how a lower bound on the algebraic connectivity of a simple graph can be characterized by a linear matrix inequality.

\begin{proposition}\label{prop:LMI for any graph}
Let $G$ be a simple graph on $n \geq 3$ vertices and  $ L$ be the Laplacian matrix of $G$.  Moreover, let $\widetilde{\beta} \in \R$, $\widetilde{\beta} \geq 0$, and $\widetilde{\alpha} =\frac{\widetilde{\beta}}{n}$. 
	Then  $a(G) \geq \widetilde{\beta}$ if and only if
	$ L +  \widetilde{\alpha} \mat J_n -  \widetilde{\beta} \mat I_n \succeq \zeros$.
\end{proposition}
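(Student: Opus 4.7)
The plan is to diagonalize $M \coloneqq L + \widetilde{\alpha}\mat J_n - \widetilde{\beta}\mat I_n$ simultaneously with $L$ by exploiting the fact that $\mathbf{1}_n$ is a common eigenvector of all three matrices $L$, $\mat J_n$ and $\mat I_n$, so that the PSD condition decouples into two pieces whose signs are easy to read off.

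First I would record the relevant spectral data. Since $L$ is the Laplacian of $G$, we have $L\mathbf{1}_n = \zeros$, $\mat J_n \mathbf{1}_n = n \mathbf{1}_n$, and trivially $\mat I_n \mathbf{1}_n = \mathbf{1}_n$. Hence $\mathbf{1}_n \in \ker L \cap \Col \mat J_n$, and the subspace $W \coloneqq \mathbf{1}_n^\perp$ is invariant under $L$, $\mat J_n$ and $\mat I_n$; on $W$, $\mat J_n$ acts as zero. Consequently $L$ and $\mat J_n$ commute and can be simultaneously diagonalized. Order the eigenvalues of $L$ as $0 = \lambda_1 \leq \lambda_2 \leq \cdots \leq \lambda_n$, where by definition $a(G) = \lambda_2$, and let $v_1 = \mathbf{1}_n/\sqrt n, v_2, \ldots, v_n$ be a corresponding orthonormal basis of eigenvectors.

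Next I would compute the eigenvalues of $M$ in this basis. On the one-dimensional span of $\mathbf{1}_n$,
\[
M \mathbf{1}_n = 0 + \widetilde{\alpha}\, n \mathbf{1}_n - \widetilde{\beta}\mathbf{1}_n = \bigl(\widetilde{\alpha} n - \widetilde{\beta}\bigr)\mathbf{1}_n = \zeros,
\]
using the definition $\widetilde{\alpha} = \widetilde{\beta}/n$. For $i \geq 2$, since $v_i \in W$ we have $\mat J_n v_i = \zeros$, and therefore $M v_i = (\lambda_i - \widetilde{\beta}) v_i$. Hence the spectrum of $M$ is $\{0\} \cup \{\lambda_i - \widetilde{\beta} : i = 2,\ldots,n\}$.

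Finally I would extract both directions of the equivalence from this spectrum. $M \succeq \zeros$ is equivalent to $\lambda_i - \widetilde{\beta} \geq 0$ for every $i \geq 2$, which in turn is equivalent to $\lambda_2 \geq \widetilde{\beta}$, i.e.\ $a(G) \geq \widetilde{\beta}$. I do not anticipate a real obstacle here; the proof is essentially a one-line spectral computation, and the only thing to be mildly careful about is justifying the simultaneous diagonalization (which follows because $L$ vanishes on $\Span\{\mathbf{1}_n\}$ and $\mat J_n$ vanishes on $W$, so they commute). The hypothesis $n \geq 3$ and $\widetilde{\beta} \geq 0$ is not needed for the equivalence itself but is consistent with $a(G) \geq \widetilde{\beta}$ being a non-trivial lower bound on the algebraic connectivity.
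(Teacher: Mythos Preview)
Your proof is correct and follows essentially the same approach as the paper: both compute the spectrum of $L + \widetilde{\alpha}\mat J_n - \widetilde{\beta}\mat I_n$ by exploiting that $\mathbf{1}_n$ is a common eigenvector of $L$ and $\mat J_n$ (with $\mat J_n$ vanishing on $\mathbf{1}_n^\perp$), yielding eigenvalues $0$ and $\lambda_i - \widetilde{\beta}$ for $i \geq 2$, from which the equivalence is immediate. Your write-up is in fact slightly more explicit than the paper's in justifying the simultaneous diagonalization, and your closing remark that $n \geq 3$ and $\widetilde{\beta} \geq 0$ are not actually used in the equivalence is accurate.
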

\begin{proof}
Let $0 = \lambda_1 \leq \lambda_2 \leq \dots \leq \lambda_n$ be the eigenvalues of $ L$, the Laplacian matrix of $G$.
	The eigenvalues of $ L +  \widetilde{\alpha} \mat J$ are $ \widetilde{\beta}$ and $a(G)=\lambda_2 \leq \dots \leq \lambda_n$.
	If~$a(G) \geq  \widetilde{\beta}$, then all eigenvalues of~$ L +  \widetilde{\alpha} \mat J$ are greater or equal than $\widetilde{\beta}$ and therefore $L +  \widetilde{\alpha} \mat J - \beta \mat I \succeq \zeros $. Conversely, if $ L +  \widetilde{\alpha} \mat J - \beta \mat I \succeq \zeros $, then all eigenvalues of~$ L +  \widetilde{\alpha} \mat J $ greater or equal to~$\widetilde{\beta}$ and therefore $a(G) \geq \widetilde{\beta}$.
\end{proof}

We now apply Proposition~\ref{prop:LMI for any graph} to the class of trees.
It is known that the algebraic connectivity for trees with $n \geq 3$ vertices lies in the interval between $\algconnPn$ and $1$, see e.g., \citep{GroneMerris90}.
Here, $\algconnPn$ is the algebraic connectivity of the path graph, and $1$ is the algebraic connectivity of the star graph.
It is also known that a  tree on $n$ vertices has exactly $n-1$ edges. Hence, a tree can be characterized as a connected graph with exactly $n-1$ edges, see also \eqref{T}. 
By using $ \widetilde{\beta} = \algconnPn$ in Proposition~\ref{prop:LMI for any graph}, we obtain the following characterization of spanning trees.

\begin{proposition}\label{prop:treeLMI}
	Let $G$ be a simple graph on $n \geq 3$ vertices with $n-1$ edges.
	Let ${L}$ be its Laplacian matrix and let $\beta = \algconnPn$ and $\alpha = \frac{\beta}{n}$.
	Then, $G$ is a tree if and only if 
	$
		 {L} + \alpha \mat{J}_n - \beta \mat{I}_n \succeq \mat{0}.
	$
\end{proposition}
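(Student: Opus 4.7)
The plan is to apply Proposition~\ref{prop:LMI for any graph} with $\widetilde{\beta} = \beta = \algconnPn$ and $\widetilde{\alpha} = \alpha = \beta/n$, and then use the two classical facts about trees mentioned in the paragraph preceding the proposition: (i) among all trees on $n$ vertices, the path graph $P_n$ attains the minimum algebraic connectivity, which equals $\beta$; and (ii) a connected graph on $n$ vertices with exactly $n-1$ edges is a tree.

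For the forward direction, I would assume $G$ is a tree on $n \geq 3$ vertices. By fact (i) (cited from Grone--Merris), $a(G) \geq \beta$. Since Proposition~\ref{prop:LMI for any graph} is an equivalence and $\beta \geq 0$, it immediately gives $L + \alpha \mat{J}_n - \beta \mat{I}_n \succeq \mat{0}$.

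For the converse, I would assume $L + \alpha \mat{J}_n - \beta \mat{I}_n \succeq \mat{0}$. Again by Proposition~\ref{prop:LMI for any graph}, this yields $a(G) \geq \beta$. Here I would observe that for $n \geq 3$ one has $\beta = \algconnPn > 0$, so $a(G) > 0$, meaning $G$ is connected. Combined with the hypothesis that $G$ has exactly $n-1$ edges, fact (ii) implies that $G$ is a tree.

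There is no real obstacle: the whole statement is a specialization of Proposition~\ref{prop:LMI for any graph} together with two standard facts from spectral graph theory, and the only thing that needs a brief justification is that $\beta > 0$ when $n \geq 3$ (so that the LMI actually forces connectivity in the reverse direction). The writing should make it explicit that the hypothesis ``$n-1$ edges'' is used only in the converse, whereas the LMI by itself certifies connectivity via the strict positivity of $\beta$ for $n \geq 3$.
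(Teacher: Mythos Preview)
Your proposal is correct and follows exactly the route the paper takes: it specializes Proposition~\ref{prop:LMI for any graph} with $\widetilde{\beta}=\beta$, uses the Grone--Merris lower bound $a(G)\ge\beta$ for trees in the forward direction, and in the converse uses $\beta>0$ for $n\ge3$ to deduce connectivity and then the ``connected with $n-1$ edges'' characterization of trees. The paper does not even spell out a separate proof of Proposition~\ref{prop:treeLMI}; it simply records these facts in the paragraph preceding the statement and presents the proposition as the specialization.
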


{Since linear matrix inequalities are tractable constraints in an optimization model,} Proposition~\ref{prop:treeLMI} can be used to derive {an}  MISDP formulation for the QMSTP.
To do so, let us first define the set of  adjacency matrices of induced subgraphs of $G$ with $n$ vertices and $n-1$ edges:
\begin{align}\label{matrA}
\mathcal{F} \coloneqq \left\{ X \in \{0,1\}^{n\times n}\cap \mathcal S ^n ~:~ \langle X, \mat J_n \rangle = 2(n-1),\  x_{ij} = 0 \text{ if }\{i,j\}\notin E \right\}. 
\end{align}
{The set of all adjacency matrices of spanning trees induced by $G$ is contained in $\mathcal{F}$, which we denote by} 
 \begin{align}\label{TM}
\mathcal{T}_M = \mathcal{F} \cap \left \{ X \in \mathcal{S}^n ~:~  \Diag(X {\bf 1}) -  X  + \alpha \mat J - \beta \mat I \succeq \zeros \right \},
 \end{align}
 where $\alpha =\frac \beta n$ and $\beta = \algconnPn$.
 Indeed, there is a bijection between the entries in $\mathcal{T}$, see \eqref{T}, and $\mathcal{T}_M$ given by ${\mathcal B}: {\mathcal T}_M \to {\mathcal T}$, where ${\mathcal B}(X)$ maps $X$ to a column vector containing the entries of X corresponding to $E$ with respect to the fixed
 ordering of the edge set.  We let $\mathcal{B}^* : \mathcal{T} \rightarrow \mathcal{T}_M$ denote its adjoint operator, which maps a column vector $x$ to an adjacency matrix containing the elements of $x$ on the positions of $E$ and zeros elsewhere.

{Now, for a given $X \in \mathcal{T}_M$, we define the lifted matrix $Y = \mathcal{B}(X) \mathcal{B}(X)^\top$ such that $Y_{ef} = 1$ if and only if $X_{ij} = 1$ and $X_{k\ell} = 1$, where $e = \{i,j\}$ and $f = \{k,\ell\}$. To handle the rank-one constraint $Y = \mathcal{B}(X) \mathcal{B}(X)^\top$, 
we exploit the following result on binary PSD matrices.}
\begin{theorem}[\cite{de2023misdp}] \label{Thm:binPSDrank}
Let $Z=\begin{psmallmatrix}
X & x \\
 x^\top  & 1  \\ 
\end{psmallmatrix} \succeq \mathbf{0}$ with $\diag({X}) = {x}$. Then, ${\rank(Z)} = 1$ if and only if~${{X} \in \{0,1\}^{n \times n}}$.
\end{theorem}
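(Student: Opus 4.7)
The plan is to prove both implications directly, using the block structure of $Z$ together with a Schur-complement argument and the well-known fact that a positive semidefinite matrix with zero diagonal must be the zero matrix.

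For the forward direction, assume $\rank(Z) = 1$. Then $Z = zz^\top$ for some vector $z \in \R^{n+1}$, which I would write as $z = (y^\top,c)^\top$ with $y \in \R^n$ and $c \in \R$. Reading off the $(n+1,n+1)$ entry of $Z$ gives $c^2 = 1$, so replacing $z$ by $-z$ if necessary I may assume $c = 1$. Then $x = y$ and $X = yy^\top = xx^\top$. Combining this with the hypothesis $\diag(X) = x$ yields $x_i^2 = x_i$ for each $i$, hence $x_i \in \{0,1\}$. Consequently $X = xx^\top \in \{0,1\}^{n \times n}$, as required.

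For the converse, assume $X \in \{0,1\}^{n\times n}$. Since $Z \succeq \mathbf{0}$ and the scalar block is strictly positive, the Schur complement with respect to the $(n+1,n+1)$ entry gives
\begin{equation*}
X - xx^\top \succeq \mathbf{0}.
\end{equation*}
The diagonal condition $\diag(X) = x$ combined with $X \in \{0,1\}^{n \times n}$ forces $x_i \in \{0,1\}$, so $x_i^2 = x_i = X_{ii}$; hence $\diag(X - xx^\top) = \mathbf{0}$. The key observation, which I expect to be the main (though mild) technical point, is that a symmetric positive semidefinite matrix with zero diagonal must itself vanish: any off-diagonal entry $m_{ij} \neq 0$ would produce a $2\times 2$ principal submatrix $\bigl(\begin{smallmatrix}0 & m_{ij}\\ m_{ij} & 0\end{smallmatrix}\bigr)$ with negative eigenvalue $-|m_{ij}|$, contradicting positive semidefiniteness. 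Applying this to $X - xx^\top$ yields $X = xx^\top$.

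Finally, with $X = xx^\top$ the matrix $Z$ factors as
\begin{equation*}
Z = \begin{pmatrix} xx^\top & x \\ x^\top & 1 \end{pmatrix} = \begin{pmatrix} x \\ 1 \end{pmatrix} \begin{pmatrix} x \\ 1 \end{pmatrix}^{\!\top},
\end{equation*}
which is manifestly of rank one, completing the proof. The overall argument is short; the only substantive ingredients are the Schur-complement reduction and the zero-diagonal lemma, both standard facts about positive semidefinite matrices.
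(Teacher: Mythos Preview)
Your proof is correct. Note, however, that the paper does not actually prove this theorem: it is quoted from \cite{de2023misdp} and used as a black box, so there is no in-paper argument to compare against. Your Schur-complement argument combined with the zero-diagonal lemma is the standard route and would serve as a clean self-contained proof were one to be included.
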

{Theorem~\ref{Thm:binPSDrank} implies that we can enforce $Y = \mathcal{B}(X)\mathcal{B}(X)^\top$ by requiring $\begin{psmallmatrix}
Y & \mathcal{B}(X) \\
 \mathcal{B}(X)^\top  & 1  \\ 
\end{psmallmatrix} \succeq \mathbf{0}$, $\diag(Y) = \mathcal{B}(X)$ and $Y \in \{0,1\}^m$.}
{Combining~\Cref{prop:LMI for any graph} with \Cref{Thm:binPSDrank},} we formulate the QMSTP as the following mixed-integer semidefinite program:
\begin{subequations}
  \label{eq:integer-sdp1}
	\begin{align}
	\min\  & \langle Q,Y\rangle \\
	\text{s.t. } &  \diag(Y) = {\mathcal B}(X)  \label{subeq:integer-sdp1:diag} \\	
&  \begin{pmatrix} Y & {\mathcal B}(X) \\
 {\mathcal B}(X)^\top &  1  \end{pmatrix} \succeq \zeros \label{subeq:integer-sdp1:psd}\\
	& \Diag (X \ones ) - X + \alpha \mat J_n - \beta \mat I_n \succeq \zeros \label{subeq:integer-sdp1:lmi2}\\
	& Y\in {\mathcal S}^m, \,\,  X \in \mathcal{F}, \label{subeq:integer-sdp1:Xadjacency}
	\end{align}
 \end{subequations}
 where $\mathcal F$ is given in \eqref{matrA}. 
In  \eqref{eq:integer-sdp1}, we do not impose integrality on the off-diagonal elements of $Y$ as those follow from the integrality of ${\mathcal B}(X)$, see~\citep{de2023misdp}.

The MISDP formulation~\eqref{eq:integer-sdp1} provides an alternative to the mixed-integer linear formulation~\eqref{eq:exactFormulation-AssadXu}. The natural question arises which of these formulations is stronger, i.e., what is the relationship between the continuous SDP (or actually DNN) relaxation of~\eqref{eq:integer-sdp1} and the continuous LP relaxation of~\eqref{eq:exactFormulation-AssadXu}. We now show that none of these relaxations dominates the other. To do so, we consider the natural mappings between solutions of~\eqref{eq:exactFormulation-AssadXu} and~\eqref{eq:integer-sdp1} given by $(x,Y) \mapsto (\mathcal{B}^*(x),Y)$ and $(X,Y) \mapsto (\mathcal{B}(X),Y)$. We show that for each relaxation there exists a feasible solution whose image under this natural mapping is not feasible for the other problem.

\begin{wrapfigure}{R}{0.4\linewidth}
    \centering
    \includegraphics[width=0.8\linewidth]{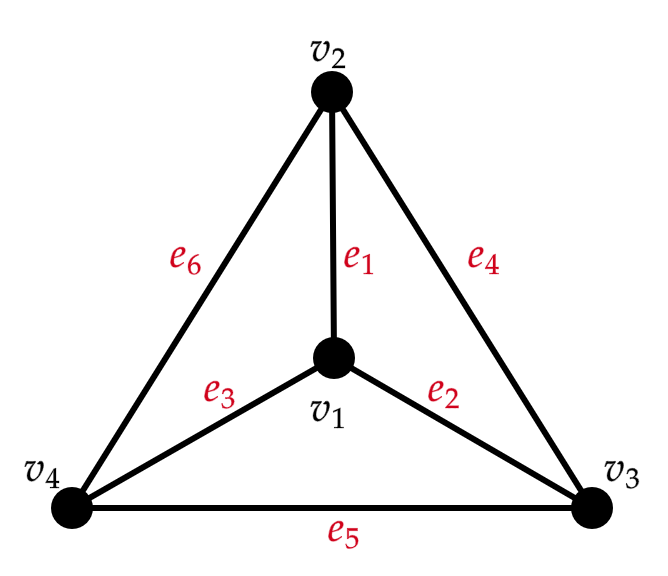}
    \caption{$G = K_4$ with given edge and vertex labeling.\label{Fig:counter1}}
    \vspace{-1cm}
\end{wrapfigure}

We start by showing that the continuous relaxation of~\eqref{eq:exactFormulation-AssadXu} does not dominate the continuous relaxation of~\eqref{eq:integer-sdp1}. Let $G = K_4$ with the edge labeling as presented in Figure~\ref{Fig:counter1} and take 
\begin{align*}
    x & = \begin{pmatrix}
        \frac{2}{3} & \frac{2}{3} & \frac{2}{3} & \frac{1}{3} & \frac{1}{3} & \frac{1}{3} 
    \end{pmatrix}^\top,  \\
    Y & = \begin{pmatrix}
        \frac{1}{3}\mathbf{I}_3 + \frac{1}{3}\mathbf{J}_3 & \frac{2}{3} \mathbf{I}_3 \\
        \frac{2}{3} \mathbf{I}_3 & \frac{1}{3} \mathbf{I}_3
    \end{pmatrix}. 
\end{align*}
It is not difficult to verify that the tuple $(x,Y)$ is feasible for the continuous relaxation of~\eqref{eq:exactFormulation-AssadXu}. Indeed, we have $\diag(Y) = x$, $Y \mathbf{1}_6 = (n-1)x$, $\mathbf{0} \leq Y \leq \mathbf{J}_6$, $Y \in \mathcal{S}^6$ and $x \in \mathcal{T}$, where the last inclusion follows from the fact that $x$ satisfies all cut-set constraints~\eqref{constr:cut-set}.  Now, let us define the matrix $Z$ as $Z=\begin{psmallmatrix}
Y & x \\
 x^\top  & 1  \\ 
\end{psmallmatrix}$.  It turns out that $Z$ has eigenvalue $-\frac{1}{3}$, thus $Z$ is not positive semidefinite. We conclude that $(\mathcal{B}^*(x), Y)$ is not feasible for~\eqref{eq:integer-sdp1}. 

Next, we show that the continuous relaxation of~\eqref{eq:integer-sdp1} does not dominate the continuous relaxation of~\eqref{eq:exactFormulation-AssadXu}. Consider again $G = K_4$ with the labeling as depicted in Figure~\ref{Fig:counter1}. Now, take
\begin{align*}
    X = \begin{pmatrix}
        0 & \frac{1}{4} & \frac{1}{4} & \frac{1}{4} \\
        \frac{1}{4} & 0 & \frac{3}{4} & \frac{3}{4} \\
        \frac{1}{4} & \frac{3}{4} & 0 & \frac{3}{4} \\
        \frac{1}{4} & \frac{3}{4} & \frac{3}{4} & 0 
    \end{pmatrix} \quad \text{and} \quad 
    Y =  \begin{pmatrix}
        \frac{1}{16}\mathbf{J}_3 + \frac{3}{16}\mathbf{I}_3 & \frac{3}{16} \mathbf{J}_3 \\
        \frac{3}{16} \mathbf{J}_3 & \frac{9}{16}\mathbf{J}_3 + \frac{3}{16}\mathbf{I}_3
    \end{pmatrix},
\end{align*}
for which it holds that $\mathcal{B}(X) = \begin{pmatrix}
    \frac{1}{4} & \frac{1}{4} & \frac{1}{4} & \frac{3}{4} &\frac{3}{4} &\frac{3}{4} 
\end{pmatrix}^\top$. One can verify that $\diag(Y) = \mathcal{B}(X)$, $\begin{psmallmatrix}
Y & \mathcal{B}(X) \\
 \mathcal{B}(X)^\top  & 1  \\ 
\end{psmallmatrix} \succeq \mathbf{0}$, $\Diag(X\mathbf{1}_4) - X + \alpha \mathbf{J}_4 - \beta \mathbf{I}_4 \succeq \mathbf{0}$, $Y \in \mathcal{S}^6$ and $X \in \mathcal{F}$. Hence, $(X,Y)$ is feasible for~\eqref{eq:integer-sdp1}. Now, let us define $x = \mathcal{B}(X)$. Then, for $S = \{v_1\}$ we have $\sum_{e \in \delta(\{v_1\})}x_e = \frac{1}{4} + \frac{1}{4} + \frac{1}{4} < 1$, so the cut-set constraint for  $S$ is not satisfied. We conclude that $x \notin \mathcal{T}$, implying that $(\mathcal{B}(X),Y)$ is not feasible for~\eqref{eq:exactFormulation-AssadXu}. 

We summarize these results in the following corollary.

\begin{corollary}
The continuous relaxation of the cut-set QMSTP formulation~\eqref{eq:exactFormulation-AssadXu} and the continuous relaxation of \eqref{eq:integer-sdp1} do not dominate one another.
\end{corollary}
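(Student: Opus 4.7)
The plan is to certify the two explicit counterexamples on $G = K_4$ already sketched in the discussion preceding the corollary. For each direction I would verify feasibility of the given pair for one relaxation and then show that its image under the natural map $\mathcal{B}^*$ or $\mathcal{B}$ violates a specific constraint of the other relaxation. No new ingredients are needed; the entire proof is a pair of verifications on a fixed $6 \times 6$ example.

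For the direction showing that the continuous relaxation of~\eqref{eq:exactFormulation-AssadXu} does not dominate that of~\eqref{eq:integer-sdp1}, I verify that $(x,Y)$ with $x = \tfrac{1}{3}(2,2,2,1,1,1)^\top$ and the stated block-structured $Y$ satisfies $\diag(Y) = x$, $Y\mathbf{1}_6 = 3x$, $\mathbf{0} \leq Y \leq \mathbf{J}_6$, and every cut-set inequality~\eqref{constr:cut-set}. Under the symmetry of $K_4$ the cut-set checks reduce to $|S| = 1$ and $|S| = 2$, both yielding sums at least $1$ by direct arithmetic. To show $(\mathcal{B}^*(x), Y)$ fails~\eqref{subeq:integer-sdp1:psd}, I form $Z = \begin{psmallmatrix} Y & x \\ x^\top & 1 \end{psmallmatrix}$ and exhibit a vector $v$ with $v^\top Z v < 0$; the claim that $-\tfrac{1}{3}$ is an eigenvalue of $Z$ is then verified by producing a concrete eigenvector built from the block symmetries of $Y$.

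For the reverse direction, I verify that the proposed $(X,Y)$ is feasible for the continuous relaxation of~\eqref{eq:integer-sdp1}: $X$ is symmetric, entrywise in $[0,1]$, supported on $E$, and satisfies $\langle X,\mathbf{J}_4\rangle = 6 = 2(n-1)$; the diagonal equality~\eqref{subeq:integer-sdp1:diag} is immediate from $\mathcal{B}(X) = (\tfrac{1}{4},\tfrac{1}{4},\tfrac{1}{4},\tfrac{3}{4},\tfrac{3}{4},\tfrac{3}{4})^\top$; and the two PSD constraints remain to be checked. For the connectivity LMI~\eqref{subeq:integer-sdp1:lmi2} one uses $\beta = 2(1-\cos(\pi/4)) = 2 - \sqrt{2}$ and tests the four eigenvalues of $\Diag(X\mathbf{1}_4) - X + \alpha\mathbf{J}_4 - \beta\mathbf{I}_4$; the $S_3$-symmetry permuting $\{v_2,v_3,v_4\}$ splits the spectrum into a trivial and a doubly degenerate piece, reducing the test to two scalar inequalities. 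The block PSD constraint~\eqref{subeq:integer-sdp1:psd} is diagonalized by the same symmetry acting on each $3 \times 3$ block of $Y$. Finally, with $x = \mathcal{B}(X)$ and $S = \{v_1\}$ one has $\sum_{e \in \delta(S)} x_e = \tfrac{3}{4} < 1$, so the cut-set constraint is violated and $(\mathcal{B}(X),Y)$ is infeasible for~\eqref{eq:exactFormulation-AssadXu}.

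The main obstacle is discharging the two PSD certifications by hand, since in each direction one must handle a $7 \times 7$ block matrix. The saving grace is the symmetry present in both counterexamples: the relevant matrices lie in the commutant of a small permutation representation, so their spectra decompose into at most three very small blocks whose positivity reduces to explicit determinant signs. Once this decomposition is carried out, both verifications collapse to a handful of elementary inequalities, and the corollary follows by combining the two non-dominance statements.
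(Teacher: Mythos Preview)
Your proposal is correct and follows essentially the same approach as the paper: the paper establishes the corollary via exactly these two $K_4$ counterexamples in the discussion immediately preceding it, verifying feasibility for one relaxation and exhibiting a violated constraint (the PSD constraint~\eqref{subeq:integer-sdp1:psd} via the eigenvalue $-\tfrac{1}{3}$, respectively the cut-set constraint for $S=\{v_1\}$) for the other. Your added detail on exploiting the $S_3$ symmetry to block-diagonalize the $7\times 7$ matrices is a sound way to carry out by hand the verifications that the paper simply asserts.
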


\subsection{Valid inequalities}\label{sec:validinequalities}
In this section, we derive  Chv\'atal-Gomory cuts from the MISDP formulation of the QMSTP introduced in the previous section. 
Interestingly, both the cut-set constraints \eqref{constr:cut-set} and some of the first-level RLT constraints turn out to be CG cuts resulting from~\eqref{eq:integer-sdp1}.

The Chv\'atal-Gomory (CG) cuts form a well-known family of cutting planes for integer programs, initially designed for integer linear programs by~\citet{Chvatal} and~\citet{Gomory}. Similar cuts for integer conic problems and integer semidefinite programs have been investigated by~\citet{CezikIyengar} and~\citet{de2022chv}, respectively.

In the sequel, we derive the cut-set constraints \eqref{constr:cut-set} as CG cuts. 
Let $S \subsetneq V$, $S\neq \emptyset$ and $X$ be feasible for \eqref{eq:integer-sdp1}. Then, 
for the PSD matrix  $\indvec S \indvec{S}^\top$ 
we have that 
\begin{align}\label{LMIinequal}
    \langle \indvec S \indvec{S}^\top, \Diag(X \mat 1) -  X + \alpha \mat J - \beta \mat I \rangle \geq 0,
\end{align}
is a valid inequality for \eqref{eq:integer-sdp1}.  { Here, we exploit the fact that the trace inner product of two positive semidefinite matrices is non-negative.}
After rewriting \eqref{LMIinequal} and exploiting
$\langle \indvec{S}\indvec{S}^\top, \Diag(X {\mathbf 1}) \rangle = \langle  \indvec{S} \mat{1}^\top, X \rangle $, we have 
$
\langle \indvec{S}\indvec{S}^\top - \indvec S \ones^\top, X \rangle \leq \langle \indvec S \indvec{S}^\top, \alpha \mat J - \beta \mat I \rangle .
$
Since the left-hand side of this inequality is integer, we may  round down the right-hand side to the nearest integer value, which results in the CG cut
$    \langle \indvec{S}\indvec{S}^\top - \indvec S \ones^\top, X \rangle \leq \lfloor \langle \indvec S \indvec{S}^\top, \alpha \mat J - \beta \mat I \rangle \rfloor$. 
After rewriting the left-hand side, this inequality becomes
\begin{align}
 - \sum_{i\in S} \sum_{j\notin S} x_{ij} \leq  \left \lfloor \lvert S \rvert (\lvert S \rvert \alpha - \beta) \right \rfloor.    
\end{align}
For $\alpha = \frac \beta n$  and $\beta = \algconnPn$, {see \Cref{prop:treeLMI}}, we have $\lfloor \lvert S \rvert (\lvert S \rvert \alpha - \beta)\rfloor= -1 $, and thus the above CG cut implies the  cut-set constraint  
$ \sum_{e\in {\delta(S)}} x_{e} \geq 1$, see also
\eqref{constr:cut-set}.
Let us summarize the previous discussion.
\begin{proposition}\label{prop:cutConstraints}
Let $S\subsetneq V$, $S\neq \emptyset$. Then, the cut-set constraint \eqref{constr:cut-set} is a Chv\'atal-Gomory  cut with respect to the MISDP \eqref{eq:integer-sdp1}.
\end{proposition}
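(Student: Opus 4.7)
The plan is to construct a specific Chv\'atal–Gomory cut from the linear matrix inequality~\eqref{subeq:integer-sdp1:lmi2} and show it coincides with the cut-set constraint~\eqref{constr:cut-set}. The natural building block is the rank-one PSD matrix $\indvec{S}\indvec{S}^\top$. First, I would take the trace inner product of $\indvec{S}\indvec{S}^\top$ with $\Diag(X\mathbf{1}) - X + \alpha \mathbf{J} - \beta \mathbf{I}$, which is nonnegative since it is the inner product of two PSD matrices. This gives a linear inequality in $X$ that is valid for every feasible solution of~\eqref{eq:integer-sdp1}.

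Next, I would simplify the inequality by using the identity $\langle \indvec{S}\indvec{S}^\top, \Diag(X\mathbf{1})\rangle = \langle \indvec{S}\mathbf{1}^\top, X\rangle$, so that after moving terms it reads
\[
\langle \indvec{S}\indvec{S}^\top - \indvec{S}\mathbf{1}^\top, X\rangle \;\leq\; \langle \indvec{S}\indvec{S}^\top, \alpha \mathbf{J} - \beta \mathbf{I}\rangle \;=\; |S|\bigl(|S|\alpha - \beta\bigr).
\]
Because $X \in \mathcal{F}$ has $\{0,1\}$-entries, the left-hand side is always integer, which is exactly the setup required to apply the CG rounding from~\citep{CezikIyengar,de2022chv} and replace the right-hand side by $\lfloor |S|(|S|\alpha - \beta)\rfloor$.

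The key arithmetic step will be to verify that this floor equals $-1$ for every $S$ with $1 \le |S| \le n-1$. Substituting $\alpha = \beta/n$, the right-hand side becomes $-\beta |S|(n-|S|)/n$, which is strictly negative (so the floor is at most $-1$), and I need to confirm that $\beta |S|(n-|S|)/n \le 1$ to conclude that it is at least $-1$. This follows from bounding $|S|(n-|S|) \le n^2/4$ together with $\beta = \algconnPn \le 4/n$ for $n \ge 3$, a quick check using $1 - \cos(\pi/n) \le \pi^2/(2n^2)$. This is the only nonroutine step in the argument.

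Finally, rewriting the resulting CG inequality as $\sum_{i\in S}\sum_{j\notin S} x_{ij} \geq 1$ gives precisely the cut-set constraint~\eqref{constr:cut-set} for $S$, completing the proof. Because the construction is parametrized by an arbitrary proper nonempty $S\subsetneq V$, it produces the entire cut-set family from a single LMI of~\eqref{eq:integer-sdp1}.
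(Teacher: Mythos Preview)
Your proposal is correct and follows essentially the same route as the paper: pair the LMI~\eqref{subeq:integer-sdp1:lmi2} with the rank-one PSD multiplier $\indvec{S}\indvec{S}^\top$, rearrange using $\langle \indvec{S}\indvec{S}^\top,\Diag(X\mathbf{1})\rangle=\langle \indvec{S}\mathbf{1}^\top,X\rangle$, round the right-hand side by integrality of the left, and identify the result with~\eqref{constr:cut-set}. The only difference is that you spell out the verification $\lfloor |S|(|S|\alpha-\beta)\rfloor=-1$ via $|S|(n-|S|)\le n^2/4$ and $\beta\le 4/n$, which the paper simply asserts.
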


Subsequently, we derive {other} valid  inequalities by exploiting the constraint \eqref{subeq:integer-sdp1:psd}, that may be equivalently reformulated as 
$Y - {\mathcal B}(X) {\mathcal B}(X)^\top \succeq {\mathbf 0}$.
Let  $X$, $Y$ be feasible for \eqref{eq:integer-sdp1},  $i \in V$, and $\indvec{\delta(i)}$ be the indicator vector of $\delta(i)$.
For $f\in E$, we define the following positive semidefinite matrix
$
P_f \coloneqq {\mathbf u}_k  \indvec{\delta(i)}^\top +\indvec{\delta(i)}{\mathbf u}_k^\top + {\mathbf I}_m +  (n-1) {\mathbf u}_k {\mathbf u}_k^\top,
$
where the index $k$ corresponds to the ordering number of the edge $f$, i.e., ${\mathcal B}(X)_k = x_f$.
Since $P_f \succeq {\mathbf 0}$, it follows that 
$\langle Y - {\mathcal B}(X) {\mathcal B}(X)^\top, P_f  \rangle \geq 0.$
By rewriting the left-hand side, we have 
$$
  \langle Y - {\mathcal B}(X) {\mathcal B}(X)^\top, P_f  \rangle =  2 \sum_{e \in \delta(i)}y_{fe} - 2 x_f  {\mathcal B}(X)\indvec{\delta(i)}^\top \geq 0,
$$
from where it follows 
$\sum_{e \in \delta(i)}y_{fe} \geq x_f  {\mathcal B}(X)\indvec{\delta(i)}^\top \geq x_f,$
since ${\mathcal B}(X)\indvec{\delta(i)}^\top \geq 1$ due to the fact that the underlying graph is connected. 
{
Observe that ${\mathcal B}(X)\indvec{\delta(i)}^\top \geq 1$ is a cut-set constraint, which by \Cref{prop:cutConstraints} is a CG cut. Therefore, the latter inequality implicitly contains a rounding step.}
Thus, we have obtained the constraints 
\begin{align} \label{eq:RLT}
    \sum_{e \in \delta(i)}y_{fe} \geq x_f \qquad \forall f \in E,~\forall i \in V.
\end{align}
Interestingly, these constraints also follow from the reformulation-linearization technique~\citep{sherali2013reformulation}  applied to the cut-set constraints \eqref{constr:cut-set} with $|S| = 1$. 
{ The reformulation-linearization technique (RLT) is a systematic method to derive stronger relaxations for non-convex programming problems by pairwise multiplication of constraints and/or variables, followed by a linearization step~\citep{sherali2013reformulation}.}
Indeed, after multiplying both sides of \eqref{constr:cut-set} by $x_f$ and replacing $x_fx_e$ by $y_{fe}$, one obtains the constraints~\eqref{eq:RLT}.
 Although sometimes written in the form of subtour elimination constraints, the inequalities~\eqref{eq:RLT} have been considered for the QMSTP in the works of~\cite{ONCAN20101762}, \cite{Rostami2015LowerBF}, \cite{PEREIRA2015149}, \cite{GUIMARAES202046}, and~\cite{Sotirov2024}.
We refer later to the constraints~\eqref{eq:RLT} as {\it  RLT-type constraints}.
\begin{proposition}
Let $i \in V$ and $f \in E$.
Then, the constraint \eqref{eq:RLT} is a Chv\'atal-Gomory  cut with respect to the MISDP \eqref{eq:integer-sdp1}.
\end{proposition}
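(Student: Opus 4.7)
The plan is to exhibit the RLT-type inequality as a Chvátal--Gomory cut by taking the trace inner product of the linear matrix inequality~\eqref{subeq:integer-sdp1:psd} with a cleverly chosen positive semidefinite multiplier, and then invoking the cut-set constraint (which \Cref{prop:cutConstraints} already certifies as a CG cut) to complete the rounding. Fix $i \in V$, $f \in E$, and let $k$ be the index of $f$ under the chosen ordering of $E$. Define
\[
P_f \coloneqq \mat{u}_k \indvec{\delta(i)}^\top + \indvec{\delta(i)} \mat{u}_k^\top + \mat{I}_m + (n-1)\mat{u}_k\mat{u}_k^\top.
\]

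First, I would verify that $P_f \succeq \zeros$. A direct way is to bound the minimum eigenvalue: the rank-two perturbation $\mat{u}_k \indvec{\delta(i)}^\top + \indvec{\delta(i)} \mat{u}_k^\top$ has smallest eigenvalue at least $-\|\indvec{\delta(i)}\|_2 = -\sqrt{\deg(i)} \geq -\sqrt{n-1}$, so adding $\mat{I}_m + (n-1)\mat{u}_k\mat{u}_k^\top$ (which contributes at least $1$ on the orthogonal complement of $\mat{u}_k$ and at least $n$ along $\mat{u}_k$) suffices to make $P_f$ positive semidefinite for $n \geq 2$. I would write this argument out explicitly, likely by diagonalizing on the two-dimensional subspace spanned by $\mat{u}_k$ and $\indvec{\delta(i)}$.

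Next, by Schur complement applied to~\eqref{subeq:integer-sdp1:psd}, we have $Y - \mathcal{B}(X)\mathcal{B}(X)^\top \succeq \zeros$, whence $\langle Y - \mathcal{B}(X)\mathcal{B}(X)^\top, P_f \rangle \geq 0$. Expanding this inner product using $\langle Y, \mat{u}_k \indvec{\delta(i)}^\top \rangle = \sum_{e \in \delta(i)} y_{fe}$ and the symmetric counterpart, together with the diagonal contributions from $\mat{I}_m$ and $(n-1)\mat{u}_k\mat{u}_k^\top$ (which cancel between the two summands after using $\diag(Y) = \mathcal{B}(X)$), yields
\[
2\sum_{e \in \delta(i)} y_{fe} - 2 x_f \bigl(\mathcal{B}(X)^\top \indvec{\delta(i)}\bigr) \geq 0,
\]
as stated in the paragraph preceding the proposition.

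The final step invokes the CG cut $\mathcal{B}(X)^\top \indvec{\delta(i)} \geq 1$, i.e., the cut-set constraint at vertex $i$, which is a CG cut by \Cref{prop:cutConstraints}. Multiplying by $x_f \geq 0$ and combining with the PSD-derived inequality gives $\sum_{e \in \delta(i)} y_{fe} \geq x_f$. Thus the RLT-type inequality arises as an iterated CG cut: one application of CG rounding (via the PSD multiplier $P_f$) on top of a constraint that is itself a CG cut, and hence it lies in the CG closure of~\eqref{eq:integer-sdp1}. The main obstacle I anticipate is cleanly justifying that $P_f \succeq \zeros$ and verifying the cancellation of diagonal contributions in the inner product; both are routine but require care with the indexing. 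The conceptual subtlety lies in recognizing that the rounding step is implicit, absorbed into the already-established cut-set CG cut rather than applied explicitly here.
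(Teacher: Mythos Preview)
Your proposal is correct and follows essentially the same route as the paper: the paper introduces the identical multiplier $P_f = \mathbf{u}_k\indvec{\delta(i)}^\top + \indvec{\delta(i)}\mathbf{u}_k^\top + \mathbf{I}_m + (n-1)\mathbf{u}_k\mathbf{u}_k^\top$, asserts $P_f \succeq \mathbf{0}$, takes the inner product with $Y - \mathcal{B}(X)\mathcal{B}(X)^\top \succeq \mathbf{0}$ to obtain $2\sum_{e\in\delta(i)} y_{fe} - 2x_f\,\mathcal{B}(X)^\top\indvec{\delta(i)} \geq 0$, and then invokes the cut-set constraint at vertex $i$ (already a CG cut by \Cref{prop:cutConstraints}) to conclude. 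Your only additions are the explicit Schur-complement justification and a sketch of why $P_f \succeq \mathbf{0}$; the paper simply states the latter without argument, so your plan to diagonalize on $\mathrm{span}(\mathbf{u}_k,\indvec{\delta(i)})$ is the right way to make that step rigorous (note that your quick eigenvalue-shift heuristic is not quite enough on its own, since the negative eigenvector of the rank-two part is not aligned with $\mathbf{u}_k$---the $2\times 2$ block computation is what actually closes it).
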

{
In the next section, we show how the inequalities~\eqref{eq:RLT} are exploited to strengthen our  DNN relaxation for the QMSTP, see~\eqref{eq:sdp2}. }

\section{DNN relaxation}
\label{sect:DNN}
Here, we derive two doubly nonnegative  relaxations for the QMSTP and show that these relaxations are not strictly feasible. We apply facial reduction in order to derive facially reduced equivalent formulations.  We emphasize that the relaxations introduced here are not the strongest QMSTP relaxations in the literature. In fact, the SDP relaxation of~\cite{GUIMARAES202046} is theoretically stronger. However, in anticipation of the algorithm for solving our relaxations, see Section~\ref{Sec:PRSM}, we aim at relaxations that are both strong and tractable to solve in order to balance the quality of the bound and the efficiency of computing it.

{Starting from the formulation~\eqref{eq:integer-sdp1}, we introduce a vector~$y$   that is equal to the diagonal of the matrix variable 
 $Y$.} We drop the linear matrix inequality~\eqref{subeq:integer-sdp1:lmi2},
  and relax the constraint  $X \in \mathcal F$, see \eqref{subeq:integer-sdp1:Xadjacency},
  to  $\ones^\top y = n - 1$.
Furthermore, we add the constraint $Y \ones = (n-1) y$ that can be derived from~\eqref{eq:integer-sdp1}.
Additionally, we impose nonnegativity constraints on the matrix variable, and obtain the following DNN relaxation:
\begin{subequations}
  \label{eq:sdp1}
	\begin{align}
	\min\  & \langle Q,  Y \rangle\\
	\text{s.t. }   & \diag(Y) =  y \label{subeq:sdp1:diag}\\ 
	& Y \ones = (n-1)  y, \,\,  \ones^\top y = n-1 \label{subeq:sdp1:sumy}\\
    & {Y \ge \zeros}, ~~ 
         \begin{pmatrix}  Y &  y \\  \mattran y & 1 \end{pmatrix} \succeq \zeros, \label{subeq:sdp1:psd}
        \end{align}
\end{subequations}
{where $Y\in {\mathcal S}^m$.}
The above relaxation does not include the connectivity constraint \eqref{subeq:integer-sdp1:lmi2}, because that constraint has only a small impact on the bound {and} it makes the relaxation more difficult to solve. In order to include a type of connectivity constraints in \eqref{eq:sdp1}, we consider valid inequalities from Section~\ref{sec:validinequalities}.
Preliminary {computational} results show that by adding the cut-set constraints~\eqref{constr:cut-set}, see also Proposition~\ref{prop:cutConstraints}, the resulting bound only marginally improves on the DNN bound~\eqref{eq:sdp1}. The RLT-type cuts~\eqref{eq:RLT}, however, turn out to have a more positive impact on the bound value. We therefore present the following strengthening of the relaxation~\eqref{eq:sdp1}:
	\begin{align} \label{eq:sdp2}
	\begin{aligned}
	\min\  & \langle Q,  Y \rangle\\
	\text{s.t. }   & \text{\eqref{subeq:sdp1:diag}--\eqref{subeq:sdp1:psd}} \\
 & \sum_{e \in \delta(i)}y_{fe} \geq y_f \qquad \forall f \in E,~\forall i \in V.
 \end{aligned}
        \end{align}
The RLT-type cuts~\eqref{eq:RLT} that are added in~\eqref{eq:sdp2} belong to a class of exponentially many RLT-type of subtour elimination constraints that may be used for solving the QMSTP, see e.g.,~\cite{PEREIRA2015149,GUIMARAES202046,Rostami2015LowerBF,Sotirov2024}. Several relaxations including the RLT-type of subtour elimination constraints have been proposed in the literature. In particular, \cite{PEREIRA2015149} propose an incomplete first-level RLT relaxation for the QMSTP that contains a large number of RLT-type of subtour elimination constraints. We emphasize that, although written differently, the cuts~\eqref{eq:RLT} are all included in the relaxation proposed by~\cite{PEREIRA2015149}. \cite{GUIMARAES202046} propose an SDP relaxation for the QMSTP that consists of the incomplete first-level RLT relaxation of~\cite{PEREIRA2015149} accompanied by an additional PSD constraint as in~\eqref{subeq:sdp1:psd}. Hence, both the relaxations~\eqref{eq:sdp1} and~\eqref{eq:sdp2} are theoretically dominated by the SDP relaxation of~\cite{GUIMARAES202046}. \cite{Sotirov2024} compute bounds for the complete first-level RTL relaxation for the QMSTP and show that those bounds   dominate the bounds from~\cite{PEREIRA2015149}.  Finally, \cite{Rostami2015LowerBF} derive an incomplete second-level RLT relaxation, that dominates previously mentioned first-level RLT relaxations, but not the SDP relaxations.

From the previous discussion it becomes clear that~\eqref{eq:sdp1} and~\eqref{eq:sdp2} would benefit from adding the remaining (exponentially many)  RLT-type subtour elimination constraints, which would result
in the stronger relaxation proposed by~\cite{GUIMARAES202046}. We decide not to do so, due to the type of algorithm we will use to solve our relaxations in Section~\ref{Sec:PRSM}. The algorithm of~\cite{GUIMARAES202046} is based on a Lagrangian relaxation scheme where the PSD constraint is dualized, after which the subproblems are solved as minimum spanning tree problems. Thus, the exponentially many subtour elimination constraints are implicitly dealt with by a combinatorial algorithm, e.g., the classical MST algorithm of~\cite{Prim1957}. In contrast, we solve our relaxations with the Peaceman-Rachford splitting method~\citep{peacemanrachford1955} originating from convex optimization. This algorithm is suitable for efficiently solving large-scale SDPs. As a drawback, however, the addition of many polyhedral cutting planes complicates the approach, especially when these cuts have overlapping support, see Section~\ref{subsect:cuttingplane} for details. For that reason, we restrict ourselves to the constraints~\eqref{eq:RLT}, resulting in an efficient algorithm that computes bounds that improve on the basic SDP relaxation~\eqref{eq:sdp1}.
Although the resulting bound is theoretically weaker than the bound from~\cite{GUIMARAES202046}, it is interesting to note that this is not what we observe in practice. Since our approach is able to solve the relaxations up to high precision, the resulting bounds are in practice stronger than the ones reported by~\cite{GUIMARAES202046}.
We refer to Section~\ref{sec:numericalresults} for details.

In the remaining part of this section,  we consider the strict feasibility of the DNN relaxations.
Let
$\Ylarge = \left (\begin{smallmatrix}  Y &  y \\  \mattran y & 1 \end{smallmatrix} \right )$
and $\Qlarge = \left (\begin{smallmatrix}   Q &  \zeros_m \\  \mattran \zeros_m & 0 \end{smallmatrix} \right ). $
{A DNN relaxation is called strictly or Slater feasible if there exists a feasible matrix $\Ylarge \succ \mathbf{0}$ that satisfies all linear inequalities with strict inequality. Such a matrix is called a Slater feasible point.}
It is not difficult to verify that 
\begin{align}\label{eignevectorT}
    T = \begin{pmatrix} \ones_m\\
    -(n -1) \end{pmatrix}
\end{align}
is an eigenvector corresponding to the zero eigenvalue of any matrix $\Ylarge$  feasible for \eqref{eq:sdp1}.
Since there is no feasible matrix~$\Ylarge$ which is positive definite, the DNN relaxation~\eqref{eq:sdp1}
has no Slater feasible point. 

{ As the intersection of the feasible set of~\eqref{eq:sdp1} and the interior of $\mathcal{S}^{m+1}_+$ turns out to be empty, the feasible set of~\eqref{eq:sdp1} is entirely contained in one of the faces of $\mathcal{S}^{m+1}_+$. The facial structure of the PSD cone can be exploited to project the feasible set of~\eqref{eq:sdp1} onto the minimal face of $\mathcal{S}^{m+1}_+$ that contains it~\citep{drusvyatskiy2017many}. This facial reduction technique can be used to obtain equivalent DNN relaxations that are strictly feasible, see~\citep{Hu2023}.}

To provide a facially reduced DNN relaxation  of~\eqref{eq:sdp1},
let $W\in   \R^{(m+1)\times m} $ be a matrix whose columns form a basis for ${\mathcal W}= {\rm null}(T^\top)$,  see~\eqref{eignevectorT}. 
As we will show in Theorem~\ref{thm:strictfeadsible} later on in this section, the relaxation \eqref{eq:sdp1} may  be equivalently written as the following facially reduced relaxation:
\begin{align}  \label{eq:facial}
	\begin{aligned} 
	\min\  & \langle \Qlarge,  W R W^\top \rangle\\
	\text{s.t. } & \diag(  W R W^\top) =   (W R W^\top) \mat u_{m+1} \\
 & (  W R W^\top)_{m+1,m+1} = 1 \\
    & W R W^\top \geq \zeros, \quad  
     R \succeq \zeros.
    \end{aligned}
	\end{align}

We obtained this relaxation from \eqref{eq:sdp1} by replacing $\Ylarge$ with $W R W^\top$ and removing redundant constraints. Note that the feasible set of~\eqref{eq:sdp1} is contained in $W \mathcal{S}^m_+ W^\top$, which is a face of $\mathcal{S}^{m+1}_+$.
To show that~\eqref{eq:facial} has an interior point, we use Theorem~3.15 from~\citep{Hu2023}. 
That theorem additionally takes into account a zero pattern in the feasible matrix, which is not present in our problem. 
\begin{theorem}[Theorem 3.15 in~\cite{Hu2023}]
	\label{thm:slater-facialred-hu}
	Let
	$
		\mathcal{Q} = \set[\Bigg]{y \in \R^{m} ~:~ 
							\mathcal{A}\roundbr[\bigg]{\roundbr[\bigg]{
								\begin{matrix}
									yy^\top & y \\
									y\top & 1
								\end{matrix}
							}} = \zeros,\ y \geq \zeros},
	$
	where $\mathcal{A}$ is a linear transformation, be the feasible set of a quadratically constrained program.
	Suppose~$\aff\roundbr{\conv\roundbr{\mathcal{Q}}} = \mathcal{L}$
	with $\dim\roundbr{\mathcal{L}} = p$. Then, there exist  a matrix $C$ with
	full row rank and $d$ such that
	$		\mathcal{L} = \set[\big]{y \in \R^{m} : Cy = d}.$
	
	Let $M = \roundbr{\begin{matrix} C & -d \end{matrix}}$ and $W$
	be a matrix such that its columns form a basis of ${\rm null}\roundbr{M}$.
    Let~$\mathcal J = \set[\big]{(i,j) : y_{i}y_{j} = 0 \ 
			\forall y \in \mathcal{Q}}$ and $\mathcal J^c$ be its complement.
   Then, there exists a Slater point $\hat R$ for 
   the facially reduced, DNN feasible set:
  	\begin{equation*}
		\hat{\mathcal{Q}}_R = \set[\Big]{R \in \mathcal{S}^{p+1} : R \succeq \zeros,\ 
				\big({WRW^\top}\big)_{\mathcal J} = 0,\ \big({WRW^\top}\big)_{\mathcal J^c} \geq \zeros,\  
				\mathcal{A}\roundbr[\big]{WRW^\top} = \zeros}.
	\end{equation*}
\end{theorem}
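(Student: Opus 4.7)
The plan is to construct an explicit Slater point $\hat R$ as a positive combination of rank-one lifts of carefully chosen points of $\mathcal Q$, and then to verify that this combination satisfies the affine constraints exactly, the zero pattern on $\mathcal J$ exactly, and the nonnegativity on $\mathcal J^c$ strictly, while being positive definite.

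The first step is to set up the parametrization. Every $y \in \mathcal Q$ satisfies $Cy = d$, equivalently $M\tilde y = \mathbf 0$ where $\tilde y = \binom{y}{1}$. Hence $\tilde y$ lies in $\mathrm{null}(M) = \mathrm{range}(W)$, and since $W$ has linearly independent columns, there is a unique $r \in \mathbb R^{p+1}$ with $\tilde y = W r$. Every rank-one lift then factors as $\tilde y \tilde y^\top = W(rr^\top)W^\top$, and the hypothesis $\mathcal A(\tilde y \tilde y^\top) = \mathbf 0$ transfers to $\mathcal A(W(rr^\top)W^\top) = \mathbf 0$.

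The second step selects finitely many points of $\mathcal Q$ with two purposes. Because $\aff(\conv(\mathcal Q)) = \mathcal L$ has dimension $p$, I can choose $y^{(0)}, \ldots, y^{(p)} \in \mathcal Q$ that are affinely independent; their lifts $\tilde y^{(k)}$ then span $\mathrm{range}(W)$ linearly, so the corresponding $r^{(k)} \in \mathbb R^{p+1}$ span all of $\mathbb R^{p+1}$. For each $(i,j) \in \mathcal J^c$ I additionally pick a witness $y^{(i,j)} \in \mathcal Q$ with $y^{(i,j)}_i y^{(i,j)}_j > 0$; such a witness exists by the very definition of $\mathcal J^c$. Setting
\[
\hat R \;=\; \sum_{k=0}^{p} r^{(k)} r^{(k)\top} \;+\; \sum_{(i,j)\in \mathcal J^c} \varepsilon_{ij}\, r^{(i,j)} r^{(i,j)\top}
\]
for small positive scalars $\varepsilon_{ij}$ gives a PSD matrix that I claim is a Slater point.

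The final step is verification. Positive definiteness $\hat R \succ \mathbf 0$ follows from the first sum alone, since the $r^{(k)}$ span $\mathbb R^{p+1}$; the second, PSD sum cannot destroy it. The equality $\mathcal A(W\hat R W^\top)=\mathbf 0$ holds term by term, since every summand is the image of a lift of a point in $\mathcal Q$. The zero pattern $(W\hat R W^\top)_{\mathcal J}=\mathbf 0$ is preserved because each $y \in \mathcal Q$ satisfies $y_i y_j = 0$ on $\mathcal J$. Strict positivity on $\mathcal J^c$ is where the witnesses enter: the $(i,j)$-entry receives the strictly positive contribution $\varepsilon_{ij}\, y^{(i,j)}_i y^{(i,j)}_j$, while every other summand contributes a nonnegative amount. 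The main subtlety, and the only real obstacle, is ensuring that the augmenting witness terms neither disturb the spanning property used for $\hat R \succ \mathbf 0$ nor violate any equality; both are automatic, since adding PSD rank-one terms can only enlarge the range of $\hat R$ and each added lift already satisfies the linear equalities by construction. The combinatorial heart of the argument is thus the simultaneous finite-selection step: finitely many points of $\mathcal Q$ suffice to witness both the affine-dimension hypothesis and every pair outside the zero pattern, which is immediate from taking the union of the two families above.
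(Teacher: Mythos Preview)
The paper does not supply a proof of this statement; it is quoted verbatim from \cite{Hu2023} and used as a black box to establish \Cref{thm:strictfeadsible}. There is therefore no ``paper's own proof'' against which to compare.

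Your construction is correct and is essentially the canonical argument for producing Slater points in facially reduced DNN relaxations: take a positive combination of rank-one lifts $\tilde y\tilde y^\top$ of feasible points, chosen so that (i) the underlying $\tilde y$'s span $\mathrm{range}(W)$, forcing $\hat R\succ\mathbf 0$, and (ii) every index pair in $\mathcal J^c$ is witnessed by at least one summand. Two small cosmetic comments. First, the scalars $\varepsilon_{ij}$ need not be small; any positive weights work, since every verification step (positive definiteness, membership in $\ker\mathcal A$, the zero pattern on~$\mathcal J$, strict positivity on~$\mathcal J^c$) is monotone or invariant under increasing them. Second, what you call ``the main subtlety'' is not a subtlety at all: adding further PSD rank-one terms that already satisfy $\mathcal A(\,\cdot\,)=\mathbf 0$ can neither shrink the range of $\hat R$ nor violate a homogeneous linear equality. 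The genuine content is exactly what you flag as the ``combinatorial heart'': the affine-dimension hypothesis $\dim\mathcal L=p$ supplies $p+1$ affinely independent points of $\mathcal Q$ whose lifts span $\mathrm{null}(M)$, and the definition of $\mathcal J^c$ supplies the witnesses. This is the argument one expects to find in the cited source.
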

We are now ready to state the following result on our facially reduced problem. 
\begin{theorem} \label{thm:strictfeadsible}
    For $n \geq 3$,  the DNN relaxation~\eqref{eq:facial} is a strictly feasible equivalent reformulation of~\eqref{eq:sdp1}.
\end{theorem}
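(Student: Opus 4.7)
The argument proceeds in two steps: first, I would establish that \eqref{eq:facial} is obtained from \eqref{eq:sdp1} through the change of variables $\Ylarge = WRW^\top$; second, I would invoke Theorem~\ref{thm:slater-facialred-hu} to produce a Slater point.

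For the equivalence, observe that every feasible $\Ylarge$ of \eqref{eq:sdp1} satisfies $\Ylarge T = \zeros$, where $T$ is given in~\eqref{eignevectorT}. Indeed, the first $m$ components of $\Ylarge T$ equal $Y\ones - (n-1)y$, which vanishes by the first part of \eqref{subeq:sdp1:sumy}, and the last component equals $\ones^\top y - (n-1)$, which vanishes by the second. Combined with $\Ylarge \succeq \zeros$, this places $\Ylarge$ in the face $W\mathcal{S}^m_+ W^\top$ of $\mathcal{S}^{m+1}_+$, so $\Ylarge = WRW^\top$ for some $R \succeq \zeros$. Substituting this expression into~\eqref{eq:sdp1}, the equations $Y\ones = (n-1)y$ and $\ones^\top y = n-1$ become automatic, while $\diag(Y) = y$ translates into $\diag(WRW^\top) = (WRW^\top)\mat u_{m+1}$ and $\Ylarge_{m+1,m+1}=1$ into $(WRW^\top)_{m+1,m+1}=1$; the nonnegativity and positive semidefiniteness conditions transfer directly. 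The reverse direction is immediate, so the two problems are equivalent in feasible set and objective value.

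For strict feasibility, I would apply Theorem~\ref{thm:slater-facialred-hu} with $\mathcal{Q}$ taken as the set of incidence vectors of spanning trees of $G$. The affine hull of $\conv(\mathcal{Q})$ equals $\{y \in \R^m : \ones^\top y = n-1\}$, so $p = m-1$ and the defining matrix $M = (\ones^\top, -(n-1))$ coincides with $T^\top$ from~\eqref{eignevectorT}; consequently, the matrix $W$ used in the paper matches the one in Theorem~\ref{thm:slater-facialred-hu}. I would then verify that the zero-pattern index set $\mathcal{J}$ is empty: each edge of $G$ lies in some spanning tree by connectedness, and any two distinct edges in a simple graph form a forest that extends, via the matroid augmentation property, to a spanning tree. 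The theorem therefore yields $\hat R \succ \zeros$ such that $W \hat R W^\top$ has strictly positive entries and satisfies the remaining equality constraints, which is exactly a Slater point for~\eqref{eq:facial}.

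The main delicate point is ensuring that the $W$ defined through $T$ alone does correspond to the minimal face carrying the feasible set of~\eqref{eq:sdp1}, i.e.\ that no additional linear equality is implicitly enforced at the relaxation level. In particular, bridge edges satisfy $y_e = 1$ at every integer feasible point, which could in principle further reduce $\aff(\conv(\mathcal{Q}))$; since such equalities are consequences of integrality combined with cut-set inequalities, and neither is present in the DNN relaxation~\eqref{eq:sdp1}, the affine hull governing the relaxation is captured by $T$ alone. Once this is settled, the remainder of the argument is a routine matter of matching the constraints of~\eqref{eq:facial} with those of the facially reduced set $\hat{\mathcal{Q}}_R$ in Theorem~\ref{thm:slater-facialred-hu}.
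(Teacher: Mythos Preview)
Your equivalence argument is fine and essentially matches the paper. The gap is in the Slater step: you propose to apply Theorem~\ref{thm:slater-facialred-hu} with $\mathcal{Q}$ equal to the set of spanning-tree incidence vectors, but this choice does not fit the theorem and does not yield the desired conclusion. First, Theorem~\ref{thm:slater-facialred-hu} requires $\mathcal{Q}$ to be cut out by quadratic \emph{equalities} $\mathcal{A}\big(\begin{smallmatrix} yy^\top & y\\ y^\top & 1\end{smallmatrix}\big)=0$ together with $y\ge 0$; the cut-set constraints are inequalities, so the spanning-tree set is not of this form, and you never specify an admissible $\mathcal{A}$. Second, and more substantively, the theorem builds $W$ from $\aff(\conv(\mathcal{Q}))$. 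If $G$ has a bridge $e$, then $y_e=1$ for every spanning tree, so $\aff(\conv(\mathcal{Q}))$ is strictly contained in $\{y:\ones^\top y=n-1\}$; the resulting $W$ has fewer than $m$ columns and is \emph{not} the $W$ used in~\eqref{eq:facial}. Your last paragraph notices this but the proposed fix---that the relaxation does not ``see'' the bridge equality---does not help: it is true that the minimal face for~\eqref{eq:sdp1} is governed by $T$ alone, but that observation does not produce a Slater point, and Theorem~\ref{thm:slater-facialred-hu} applied to spanning trees simply delivers an $\hat R$ in the wrong space.

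The paper avoids all of this by taking $\mathcal{Q}_n(m)=\{y\in\{0,1\}^m:\ones^\top y=n-1\}$, i.e.\ \emph{all} binary vectors with $n-1$ ones, not just spanning trees. This set is exactly of the required quadratic-equality form (encode $y_i^2=y_i$, $\ones^\top y=n-1$, and the redundant row constraints $Y\ones=(n-1)y$ as the linear map $\mathcal{A}$), its affine hull is always $\{y:\ones^\top y=n-1\}$ so the $W$ from the theorem coincides with the $W$ of~\eqref{eq:facial}, and the index set $\mathcal{J}$ is trivially empty because any two coordinates can be set to $1$ simultaneously (using only $n\ge 3$). With this choice the conclusion of Theorem~\ref{thm:slater-facialred-hu} is precisely strict feasibility of~\eqref{eq:facial}. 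Replacing your $\mathcal{Q}$ by $\mathcal{Q}_n(m)$ repairs the argument with no further changes.
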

\begin{proof}    
    Let
    \begin{align*}
        \mathcal{Q}_n(m) &=\set[\big]{y \in \set{0,1}^m : \ones_m^\top y = n - 1}
        =\set[\big]{y \in \R^m : \ones_m^\top y = n -1, \ y_i y_i = y_i \ \forall i \in [m]}\\[0.1em]
        &= \set[\bigg]{y \in \R^m : 
                         \mathcal{A}\roundbr[\bigg]{\begin{pmatrix} yy^\top & y \\ y^\top & 1 \end{pmatrix}} = \zeros,
                         \ y \geq \zeros},
    \end{align*}
     where
   $
         \mathcal{A}(X)= \begin{pmatrix}
                        \mathcal{A}_1(X),
                        \cdots ,                      \mathcal{A}_{2m+1}(X)
                        \end{pmatrix}^\top
   $
    with
    \begin{align*}
        \mathcal{A}_{i}(X) &= \bigg\langle \begin{pmatrix}
            \mat u_i  \mat u_i^\top & -\frac 1 2 \mat u_i\\
            -\frac 1 2 \mat u_i^\top & 0
        \end{pmatrix}, X \bigg\rangle &&\text{for all } i \in [m] \text{, and}\\
    \mathcal{A}_{m+i}(X) &=   
    \bigg\langle \frac{1}{2}\bigg(\mat u_i \begin{pmatrix} \ones_m^\top & -(n-1) \end{pmatrix} + \begin{pmatrix}\ones_m \\ -(n-1)\end{pmatrix} u_i^\top \bigg) , X \bigg\rangle
                            &&\text{for all }  i \in [m + 1].
    \end{align*}
    Note that in the definition of~$\mathcal Q_n(m)$, the equality $\mathcal{A}_{i}(X) = 0$ models the constraint $y_i^2 = y_i$
    for all $i \in [m]$.
     The constraint $\mathcal{A}_{2m+1}(X) = 0$ models the constraint $\ones_m^\top y = n -1$.
    For the indices $i \in [m]$, the constraint $\mathcal{A}_{m + i}(X) = 0$ models the redundant constraint~$y_i(\ones_m^\top y) = (n -1) y_i$ for
    all~$1 \leq i \leq m$.
    
    The convex hull equals
    $\conv(\mathcal{Q}_n(m)) =\set[\big]{y \in [0,1]^m : \ones_m^\top y = n - 1}$.
    For each index $i \in [m]$ there exist vectors $y^1, y^2 \in  \mathcal{Q}_n(m)$ such that
    $y^1_i > 0$ and $y^2_i < 1$,
    hence, we get that the affine hull is
$    \aff(\conv( \mathcal{Q}_n(m))) = \set{y \in \R^m : \ones_m^\top y = n - 1}, $    
    and has dimension $m - \rank(\ones_m^\top) = m - 1$. 
 Hence,~$M = T^\top$ where $M$ is from Theorem~\ref{thm:slater-facialred-hu} and $T$ given in \eqref{eignevectorT}.
Let $W \in \R^{(m+1)\times m}$ be a matrix whose columns form a basis of the nullspace of $M$.
 Then, a face of ${\mathcal S}^{m+1}_+$ containing the feasible set of \eqref{eq:sdp1} is of the form $W {\mathcal S}^{m}_+ W^\top$. Therefore, one can replace $\Ylarge$ with $W R W^\top$ in \eqref{eq:sdp1}.

  Moreover,  it holds that for each pair of indices $(i,j) \in [m] \times [m]$,
    there exists a vector $y \in \mathcal{Q}_n(m)$ such that $y_i = y_j = 1$, and hence
    the index set~$\mathcal J = \set[\big]{(i,j) : y_{i}y_{j} = 0 \ \forall y \in \mathcal{Q}}$  is empty.
    Thus, by~\Cref{thm:slater-facialred-hu}, there exists a Slater feasible point for the facially reduced 
    DNN relaxation \eqref{eq:facial}.
        \end{proof}
On top of imposing strict feasibility, facial reduction reduces both the number of variables and constraints. Therefore, the relaxation~\eqref{eq:facial} is preferred over~\eqref{eq:sdp1}. In a similar fashion, relaxation~\eqref{eq:sdp2} can be rewritten by replacing $\Ylarge$ in~\eqref{eq:sdp2} by $WRW^\top$.

\section{Peaceman-Rachford splitting method for the QMSTP} \label{Sec:PRSM}

Interior point solvers have difficulties computing our DNN relaxations for medium-sized problems in a reasonable time due to the large number of (inequality) constraints.
Therefore, we use the Peaceman-Rachford splitting method  (PRSM) for computing the bounds.
The PRSM was first proposed in~\citep{peacemanrachford1955,lionsmercier1979} and is
a symmetric variant of the alternating direction method of multipliers (ADMM). For more details and convergence results, we refer to~\citep{He2016}.

\subsection{PRSM for solving the DNN relaxation}

In this section, we outline the main steps of the  Peaceman-Rachford splitting
method  for solving the DNN relaxation for the QMSTP~\eqref{eq:facial}.

Recall that the matrix $W$ should be such that its columns provide a basis for $\mathcal{W}= {\rm null}(T^\top)$. For reasons explained later, we additionally require the columns of $W$ to be orthonormal. Therefore, we take $W$ as the matrix obtained from applying a QR decomposition to $(\begin{matrix}(n-1)\mat I_m & \ones_m \end{matrix})^\top$.

Now, we define the following sets
\begin{align}
    \mathcal R & \coloneqq \left \{ R \in S^m ~\colon~ R \succeq \zeros,\ \tr(R) = n \right \}, \label{setR}\\
    \mathcal Y & \coloneqq \bigg\{ \Ylarge \in S^{m+1} ~\colon~ \Ylarge = \begin{pmatrix}
        Y & y \\
        y^\top & 1
    \end{pmatrix},\ \diag(Y) = y,\ \zeros \leq \Ylarge \leq \mat J,\ \tr(\Ylarge) = n\bigg\},\label{setY}
\end{align} 
and rewrite~\eqref{eq:facial} as
\begin{equation}
    \label{eq:facialred-minimization-oneline}
    \min~ \Big\{ \big\langle \Qlarge, \Ylarge \big\rangle ~\colon~ \Ylarge = WRW^\top,\ 
                    R \in \mathcal R,\ \Ylarge \in \mathcal Y \Big\}.
\end{equation}  
Note that we added redundant constraints to $\mathcal Y$ and $\mathcal R$,
 where the constraint $\tr(R) = n$ holds, since the columns in $W$ are orthonormalized.
Those redundant constraints help for the efficiency of the algorithm, see e.g.,~\citep{deMeijer2023gpp,oliveira2018admm,Li2021}.

{The class of alternating direction augmented Lagrangian methods, to which the PRSM belongs, aims to minimize the augmented Lagrangian function, see e.g.,~\citep{wen2010alternating,{Boyd2011}}.}
For a fixed penalty parameter~$\revision{\tau} > 0$, the augmented Lagrangian function of~\eqref{eq:facialred-minimization-oneline}
w.r.t.~the constraint~$\Ylarge = WRW^\top$ is
\begin{equation*}
    \mathcal L_{\revision{\tau}}(R,\Ylarge, S) = \big\langle \Qlarge, \Ylarge \big\rangle +
         \big\langle S, \Ylarge - WRW^\top \big\rangle + \frac{\revision{\tau}}{2}
         \big\lVert \Ylarge - WRW^\top \big\rVert^2_F.
\end{equation*}
The basic idea  of the PRSM is to iteratively alternate between optimizing~$\mathcal L_{\revision{\tau}}$
with respect to $R$ and $\Ylarge$ and updating the dual variable~$S$.
The $(k+1)$-th iteration of the PRSM
to minimize the augmented Lagrangian function is
\begin{align*}
    R^{k+1} &= \argmin_{R \in \mathcal R} \mathcal L_{\revision{\tau}}(R, \Ylarge^k, S^k)\\
    S^{\frac{k+1}{2}} &= S^k + \gamma_1 \revision{\tau}(\Ylarge^{k} - WR^{k+1}W^\top)\\
    \Ylarge^{k+1} &= \argmin_{\Ylarge \in \mathcal Y} \mathcal L_\revision{\tau}(R^{k+1}, \Ylarge, S^{\frac{k+1}{2}})\\
    S^{k+1} &= S^{\frac{k+1}{2}} + \gamma_2 \revision{\tau}(\Ylarge^{k+1} - WR^{k+1}W^\top),
\end{align*}
with step lengths $\gamma_1 \in (-1, 1)$ and $\gamma_2 \in \big( 0, \frac{1 + \sqrt 5}{2} \big)$
satisfying $\gamma_1 + \gamma_2 > 0$ and $\lvert\gamma_1\rvert < 1 + \gamma_2 - \gamma_2^2$, see~\citep{He2016}.
The optimization problems occurring in this PRSM scheme can be simplified to projection problems.
Namely, optimizing the augmented Lagrangian over~$\mathcal R$ can be simplified to
\begin{align*}
    R^{k+1} &= \argmin_{R \in \mathcal R}~ \langle S^k, -WRW^\top\rangle +
                \frac{\revision{\tau}}{2} \big\lVert \Ylarge^k - WRW^\top \big\rVert_F 
            = \mathcal P_{\mathcal R} \bigg( W^\top \bigg( \Ylarge^k +
                                    \frac{1}{\revision{\tau}} S^k \bigg) W \bigg),
\end{align*}
where we exploited the fact that the columns of $W$ are orthonormal.
The projection~$\mathcal{P}_{\mathcal{R}}(M)$ of a matrix~$M \in \mathcal{S}^{m}$ onto the set $\mathcal R$ can be computed
by projecting the eigenvalues of~$M$ in the spectral decomposition onto the $n$-simplex $\Delta_n$, see e.g.,~\citep{Li2021}.
In more detail, let~$M = U\Diag({\lambda})U^\top$ be the eigenvalue decomposition of~$M$ with~${\lambda}$
denoting the vector of eigenvalues of~$M$, then 
$\mathcal{P}_\mathcal{R}(M) = U \Diag(\mathcal{P}_{\Delta_n}({\lambda})) U^\top.$
The projection onto the simplex can be performed efficiently. We refer to~\citep{Condat2016} for an overview of algorithms for projecting onto the simplex and their complexities.

Similarly, the optimization problem over the polyhedral set~$\mathcal Y$ can be reformulated as
\begin{align*}
    \Ylarge^{k+1} &= \argmin_{\Ylarge \in \mathcal Y}
                    \big\langle \Qlarge, \Ylarge \big\rangle +
                    \big\langle S^{\frac{k+1}{2}}, \Ylarge \big\rangle +
                    \frac{\revision{\tau}}{2} \big\lVert \Ylarge - WR^{k+1}W^\top \big\rVert_F 
                  = \mathcal{P}_{\mathcal Y}\Big( WR^{k+1}W^\top -
                        \frac{1}{\revision{\tau}} \big( \Qlarge + S^{\frac{k+1}{2}} \big) \Big).
\end{align*}
The projection onto $\mathcal Y$ can then be done in the following way
\begin{equation*}
    \mathcal P_\mathcal{Y}\bigg(\begin{pmatrix} Z & z\\ z^\top & \omega \end{pmatrix} \bigg) =
                \mathcal P_{[0,1]} \Bigg( 
                        \begin{pmatrix}
                            Z - \Diag(\diag(Z)) + v & v\\
                            v^\top & 1
                        \end{pmatrix}\Bigg),
\end{equation*}
where $v = \mathcal{P}_{\bar \Delta(n - 1)}\big(\frac 1 3 \diag(Z) + \frac 2 3 z\big)$ and $\mathcal{P}_{[0,1]}$ denotes the elementwise projection onto the interval~$[0,1]$.

\subsection{PRSM for solving the strengthened DNN relaxation}
\label{subsect:cuttingplane} 

In this subsection, we modify the previously described PRSM algorithm that solves the  relaxation~\eqref{eq:facial}, so that it can handle additional RLT-type constraints.

Let us extend the set $\mathcal{Y}$, see~\eqref{setY}, by adding the RLT-type constraints, yielding
\begin{multline*}
    \mathcal Y_{RLT} = \bigg\{ \Ylarge \in S^{m+1} \colon \Ylarge = \begin{pmatrix}
        Y & y \\
        y^\top & 1
    \end{pmatrix},\ \diag(Y) = y,\ \tr(\Ylarge)=n,\ \zeros \leq \Ylarge \leq \mat J,\\
    \sum_{e \in \delta(i)}y_{fe} \geq y_f \quad \forall f \in E,~\forall i \in V \bigg\}.
\end{multline*}
Thus, the strengthened DNN relaxation \eqref{eq:facialred-minimization-oneline}
is as follows
\begin{equation}
    \label{eq:facialred-minimization-oneline2}
    \min~ \Big\{ \big\langle \Qlarge, \Ylarge \big\rangle ~\colon~ \Ylarge = WRW^\top,\ 
                    R \in \mathcal R,\ \Ylarge \in \mathcal Y_{RLT} \Big\}.
\end{equation}  
The  RLT-type constraints make the projection onto~$\mathcal{Y}_{RLT}$ significantly harder. To the best of our knowledge, there is no closed-form expression for the projection onto $\mathcal Y_{RLT}$.
However, one may write $\mathcal{Y}_{RLT}$  as an intersection of sets that are easier
to project on and then use an algorithm to project onto the intersection of convex sets. 
The cyclic Dykstra's projection algorithm~\citep{dykstraboyle1986} is a suitable algorithm. An overview and analysis of algorithms to project onto the intersection of convex sets can be found in~\citep{BauschkeKoch2013}.

To apply Dykstra's cyclic projection algorithm, let $\mathcal{K}$ denote a coloring of the graph $G$, i.e., $\mathcal{K} = \{K_1, \ldots, K_N\}$ is a partitioning of $V$ into independent sets of $G$. 
We then define the polyhedral sets~$\mathcal{Y}^k$ as
\begin{align*}
    \mathcal Y^k \coloneqq \left\{ \Ylarge \in \mathbb{R}^{(m+1)\times (m+1)} \, :\,\, \Ylarge = \begin{pmatrix}
        Y & y \\
        y^\top & 1
    \end{pmatrix},\ \diag(Y) = y,~\sum_{e \in \delta(i)}y_{fe} \geq y_f \quad \forall f \in E,~\forall i \in K_k  \right\},
\end{align*}
for $k = 1, \ldots, N$.
With this we can now rewrite $\mathcal{Y}_{RLT}$ as
$   \mathcal Y_{RLT} = \mathcal{Y} \cap \left( \bigcap_{k = 1}^N \mathcal{Y}^k \right).$

 The projection onto the sets $\mathcal{Y}^k$ can be performed independently over each row~$f \in E$ of~$Y$ and the corresponding entries of~$y_{f}$ in~$\Ylarge$.
 This allows us to restrict ourselves to projections onto the following type of sets
\begin{align}
    T^k_{f} \coloneqq \left\{ z \in \mathbb{R}^{m+2} \, : \, \, z_f = z_{m+1} = z_{m+2},~\sum_{e \in \delta(i)}z_e \geq z_{f} \quad \forall i \in K_k \right\},
\end{align}
 where the first~$m+1$ entries correspond to the $f$-th row of~$\Ylarge$ and the last entry~$z_{m+2}$
corresponds to~$\Ylarge_{m+1,f}$.
The projection onto~$T_f^k$ can then be computed as presented in the following proposition.
\begin{proposition} \label{Lem:projection_RLT} 
Let $a \in \mathbb{R}^{m+2}$, $f \in E$ and let $\mathcal{K} = \{K_1, \ldots, K_N\}$ denote a coloring of $G$. For each $i \in K_k$, we define $g_{i} \coloneqq  \frac{a_{f} + a_{m+1} + a_{m+2}}{3} - \sum_{e \in \delta(i)}a_{e}$ and sort these values in non-increasing order, i.e., $g_{\sigma(1)} \geq g_{\sigma(2)} \geq \dots \geq g_{\sigma(n_k)}$, where $n_k = |K_k|$ and $\sigma \colon [n_k] \to  K_k$ is an appropriate sorting permutation. For each $p \in [n_k]$, let
\begin{align*}
    \omega(p) \coloneqq \frac{\sum_{j = 1}^p \frac{g_{ \sigma(j)}}{d({\sigma(j)})}}{3 + \sum_{j = 1}^p \frac{1}{d({\sigma(j)})}},
\end{align*}
where $d(\sigma(j))$
denotes the degree of vertex $\sigma(j)$ in $G$.
If $g_{i} \leq 0$ for all $i \in K_k$, then $\mathcal{P}_{T^k_f}(a) = z$, where $z_e = a_e$ for all $e \in E\setminus\{f\}$ and $z_{f} = z_{m+1} = z_{m+2} = \frac{a_f + a_{m+1} + a_{m+2}}{3}$. Otherwise, let $p^*$ denote the largest index $p$ for which $g_{\sigma(p)} > \omega(p)$.
Then, $\mathcal{P}_{T^k_f}(a) = z$, where
\begin{align*}
    z_{e} &= \begin{cases} \frac{a_f + a_{m+1} + a_{m+2}}{3} - \omega(p^*) & \text{if $e \in \{f, m+1, m+2\}$}, \\ 
    a_e + \frac{1}{d(i)}(g_{i} - \omega(p^*)) & \text{if $e \in \delta(i)\setminus\{f\}$ for $i \in K_k \setminus V(f)$ with $\sigma(i) \leq p^*$,} \\
    a_e - \frac{1}{d(i)-1} \sum_{e \in \delta(i)\setminus\{f\}} a_e & \text{if $e \in \delta(i) \setminus \{f\}$ for $i \in K_k \cap V(f)$ with $\sum_{e \in \delta(i)\setminus\{f\}} a_e < 0$},\\
    a_e & \text{otherwise.}
    \end{cases}
\end{align*}
\end{proposition}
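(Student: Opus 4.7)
The projection onto the non-empty closed convex polyhedron $T^k_f$ exists and is unique, and the plan is to verify that the vector $z$ described in the proposition is the KKT point of $\min \tfrac{1}{2}\|z-a\|^2$ over $z \in T^k_f$. I introduce multipliers $\mu_1, \mu_2 \in \mathbb{R}$ for the equalities $z_{m+1} = z_{f}$ and $z_{m+2} = z_{f}$, and $\lambda_i \geq 0$ for the inequality constraints indexed by $i \in K_k$, and derive the stationarity and complementary-slackness conditions.

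The key structural point is that $K_k$ is an independent set of $G$, so the edge sets $\delta(i)$, $i \in K_k$, are pairwise disjoint and $f$ belongs to at most one of them (namely $\delta(u)$ with $u$ the unique element, if any, of $V(f) \cap K_k$). Stationarity for coordinates outside $\{f, m+1, m+2\}$ then yields $z_e = a_e + \lambda_i$ whenever $e \in \delta(i) \setminus \{f\}$ for some $i \in K_k$, and $z_e = a_e$ otherwise. Stationarity for $z_{m+1}$ and $z_{m+2}$ gives $\mu_1 = z_f - a_{m+1}$ and $\mu_2 = z_f - a_{m+2}$, and stationarity for $z_f$ collapses to
\begin{equation*}
    3 z_f = a_f + a_{m+1} + a_{m+2} - \sum_{i \in K_k \setminus V(f)} \lambda_i,
\end{equation*}
because the $\pm 1$ contributions of $\lambda_u$ for $u \in V(f) \cap K_k$ cancel in the stationarity condition.

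For $u \in V(f) \cap K_k$ (if it exists), the constraint $\sum_{e \in \delta(u)} z_e \geq z_f$ decouples from $z_f$. If active, it forces $(d(u)-1)\lambda_u = -\sum_{e \in \delta(u) \setminus \{f\}} a_e$, admissible ($\lambda_u \geq 0$) precisely when this sum is non-positive, which recovers the third case of the stated formula. For $i \in K_k \setminus V(f)$, set $c = \tfrac{1}{3}(a_f + a_{m+1} + a_{m+2})$, $g_i = c - \sum_{e \in \delta(i)} a_e$, and $\omega = c - z_f$. Complementary slackness on the active set $S$ then gives $\lambda_i = (g_i - \omega)/d(i)$ and $\omega = \tfrac{1}{3} \sum_{i \in S} \lambda_i$, leading to the self-consistency condition
\begin{equation*}
    \omega = \frac{\sum_{i \in S} g_i / d(i)}{3 + \sum_{i \in S} 1/d(i)}, \qquad g_i > \omega \text{ for } i \in S, \quad g_i \leq \omega \text{ for } i \in (K_k \setminus V(f)) \setminus S.
\end{equation*}

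The principal obstacle is identifying $S$ by the sort-and-threshold rule of the proposition. I would address this via the classical monotone-pivot argument used for simplex projections~\citep{Condat2016}: writing $\omega(p)$ for the right-hand side above with $S = \{\sigma(1), \ldots, \sigma(p)\}$, a direct manipulation shows that $(\omega(p+1) - \omega(p))(3 + \sum_{j \leq p} 1/d(\sigma(j))) = \tfrac{1}{d(\sigma(p+1))}(g_{\sigma(p+1)} - \omega(p+1))$, so $\omega(p+1) > \omega(p)$ iff $g_{\sigma(p+1)} > \omega(p)$. Because $g_{\sigma(1)} \geq \cdots \geq g_{\sigma(n_k)}$, there is a unique index $p^*$ with $g_{\sigma(p)} > \omega(p)$ for $p \leq p^*$ and $g_{\sigma(p)} \leq \omega(p)$ otherwise, so $S = \{\sigma(1), \ldots, \sigma(p^*)\}$ is the unique active set compatible with dual feasibility and complementary slackness. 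Substituting $p^*$ into the formulas reproduces the first two cases of the stated $z$, while the degenerate situation $g_i \leq 0$ for all $i$ corresponds to $p^* = 0$ and $z_f = c$, matching the first branch of the proposition. All KKT conditions then hold, showing $z$ is the required projection.
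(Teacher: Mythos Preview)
Your proposal is correct and follows essentially the same KKT-based route as the paper: you set up stationarity and complementary slackness, separate the vertex $u\in K_k\cap V(f)$ whose constraint decouples from $z_f$, and reduce the problem on $K_k\setminus V(f)$ to identifying a threshold $\omega$ with $g_i>\omega$ on the active set and $g_i\le\omega$ off it. The one genuine addition over the paper is your recurrence $(\omega(p{+}1)-\omega(p))\,B_p=\tfrac{1}{d(\sigma(p{+}1))}(g_{\sigma(p{+}1)}-\omega(p{+}1))$, which cleanly shows that $\{p:g_{\sigma(p)}>\omega(p)\}$ is an initial segment and hence that the largest such $p$ is the unique active-set size; the paper instead invokes existence of the projection to guarantee some admissible $p^*$ and leaves this monotonicity implicit.
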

\begin{proof}
First, observe that if $g_{\sigma(1)} \leq 0$, then $g_{i} \leq 0$ for all $i \in K_k$. Consequently, the projection of~$a$ onto $T^k_f$ is given by $z$, where $z$ is such that $z_e = a_e$ for all $e \in E\setminus\{f\}$ and $z_f = z_{m+1} = z_{m+2} = \frac{a_f + a_{m+1} + a_{m+2}}{3}$.

If $g_{\sigma(1)} > 0$, then 
$\omega(1) = \frac{\frac{g_{\sigma(1)}}{d(\sigma(1))}}{3 + \frac{1}{d(\sigma(1))}} < \frac{g_{\sigma(1)}}{d(\sigma(1))} \leq g_{\sigma(1)}.$
Hence, the largest index $p$ for which $g_{\sigma(p)} > \omega(p)$, i.e., the index $p^*$, exists. Next, we prove that the projection~$z = \mathcal{P}_{T^k_f}(a)$ is of the described form.

    Using the fact that $z_f = z_{m+1} = z_{m+2}$, the vector $z$ can be obtained as the solution of the following optimization problem, where we restrict to the support of the constraints in $T^k_f$.
    \begin{align} \label{eq:optim_problem}
        \begin{aligned}\min_{z} \quad & \sum_{i \in K_k}\,\sum_{e \in \delta(i)\setminus\{f\}} ||a_e - z_e||_2^2 + ||a_f - z_f||^2_2 + ||a_{m+1} - z_{f}||^2_2 + ||a_{m+2} - z_{f}||^2_2 \\
        \text{s.t.} \quad & \sum_{e \in \delta(i)}z_e \geq z_f \qquad \forall i \in K_k.
        \end{aligned}
    \end{align}
    Let $\lambda_{i}$, $i \in K_k$, denote the dual variables corresponding to the constraints of~\eqref{eq:optim_problem}. We further denote by~$V(f)$ the two vertices in~$G$ adjacent to~$f \in E$. Then, the KKT optimality conditions for~\eqref{eq:optim_problem} are as follows
    {
    \begin{align}
          2(z_e - a_e) - \lambda_{i} &= 0 \qquad \forall e \in \delta(i)\setminus\{f\},~\forall i \in K_k \label{KKT1} \\
          6z_f - 2(a_f + a_{m+1} + a_{m+2}) + \sum_{i \in K_k\setminus V(f)}\lambda_{i} &= 0 \label{KKT2}\\
          \sum_{e \in \delta(i)}z_e &\geq z_f \quad\ \, \forall i \in K_k \label{KKT3}\\
          \lambda_{i}(z_f - \sum_{e \in \delta(i)}z_e) &= 0 \qquad \forall i \in K_k \label{KKT4}\\
          \lambda_{i} &\geq 0 \qquad \forall i \in K_k. \label{KKT5}
    \end{align}
    }%
    It follows from~\eqref{KKT1} and~\eqref{KKT2} that we have
    \begin{equation}\label{KKT6}
    \begin{aligned}
        z_f &= \frac{a_f + a_{m+1} + a_{m+2}}{3} - \frac{1}{6}\sum_{i \in K_k \setminus V(f)}\lambda_{i}, &&  \text{and}\\
        z_e &= a_e + \frac{1}{2}\lambda_{i} && \forall e \in \delta(i)\setminus\{f\},\ \forall i\in K_k.
    \end{aligned}
    \end{equation}
 Suppose $K^* \subseteq K_k$ is the set of vertices for which $\lambda_{i} > 0$ at an optimal solution of~\eqref{eq:optim_problem}. The complementary slackness constraints~\eqref{KKT4} then imply that $z_f = \sum_{e \in \delta(i)}z_e$ for all $i \in K^* \setminus V(f)$ and $\sum_{e \in \delta(i)\setminus\{f\}}z_e = 0$ for $i \in K^* \cap V(f)$. Note that $\lvert K^* \cap V(f) \rvert \leq 1$ since $K_k$ is an independent set in~$G$. By exploiting~\eqref{KKT6} and $\sum_{j \in K_k\setminus V(f)}\lambda_{j} = \sum_{j \in K^*\setminus V(f)}\lambda_{j}$, these equations can be rewritten to 
 \begin{align}
     \frac{a_f + a_{m+1} + a_{m+2}}{3} &- \frac{1}{6}\sum_{j \in K^*\setminus V(f)}\lambda_{j} = \sum_{e \in \delta(i)} \left( a_e + \frac{1}{2}\lambda_{i} \right) \nonumber \\
     \Longleftrightarrow \quad \lambda_{i} & = \frac{2}{d(i)} \left(\frac{a_f + a_{m+1} + a_{m+2}}{3} - \sum_{e \in \delta(i)}a_e - \frac{1}{6}\sum_{j \in K^* \setminus V(f)}\lambda_{j} \right) \nonumber \\
     \Longleftrightarrow \quad \lambda_{i} & = \frac{2}{d(i)} \left(g_{i} - \frac{1}{6}\sum_{j \in K^*\setminus V(f)}\lambda_{j} \right) \label{eq:lambdaexpr}
 \end{align}
 for all $i \in K^*\setminus V(f)$. Summing the latter equations over all $i \in K^*\setminus V(f)$ yields
 \begin{equation*}
     \sum_{i \in K^*\setminus V(f)}\lambda_{i} = 2 \sum_{i \in K^*\setminus V(f)} \frac{g_{i}}{d(i)} - \frac{1}{3}\sum_{i \in K^*\setminus V(f)} \frac{1}{d(i)} \sum_{j \in K^*\setminus V(f)}\lambda_{j},
\end{equation*}
or equivalently,
$     \sum_{i \in K^*\setminus V(f)} \lambda_{i} = \frac{2 \sum_{i \in K^*\setminus V(f)}\frac{g_{i}}{d(i)}}{1 + \frac{1}{3}\sum_{i \in K^*\setminus V(f)}\frac{1}{d(i)}}.$
 After substitution into~\eqref{eq:lambdaexpr}, we obtain 
 \begin{align} \label{KKT7}
     \lambda_{i} = \frac{2}{d(i)} \left(g_{i} - \frac{\sum_{i \in K^*\setminus V(f)}\frac{g_i}{d(i)}}{3 + \sum_{i \in K^*\setminus V(f)}\frac{1}{d(i)}} \right) > 0
 \end{align}
 for all $i \in K^*\setminus V(f)$. For each $i \in (K_k \setminus K^*) \setminus V(F)$, we have $\lambda_i = 0$. The inequalities~\eqref{KKT3} for these $i$ then read
 \begin{equation*} 
     \sum_{e \in \delta(i)}a_e \geq \frac{a_f + a_{m+1} + a_{m+2}}{3} - \frac{\sum_{i \in K^*\setminus V(f)}\frac{g_i}{d(i)}}{3 + \sum_{i \in K^*\setminus V(f)}\frac{1}{d(i)}},
 \end{equation*}
or equivalently, 
\begin{equation}\label{KKT8}
g_i - \frac{\sum_{i \in K^*\setminus V(f)}\frac{g_i}{d(i)}}{3 + \sum_{i \in K^*\setminus V(f)}\frac{1}{d(i)}} \leq 0.
 \end{equation}
 By combining~\eqref{KKT7} and~\eqref{KKT8} we obtain the following optimality conditions on the dual variables $\lambda$ concerning the indices in $K_k \setminus V(F)$
 \begin{align} \label{KKT9}
 \left\{
     \begin{aligned}
         \frac{2}{d(i)} \left(g_i - \frac{\sum_{i \in K^*\setminus V(f)}\frac{g_i}{d(i)}}{3 + \sum_{i \in K^*\setminus V(f)}\frac{1}{d(i)}} \right) & > 0 & & \text{for all $i \in K^*\setminus V(f)$,} \\ 
         g_i - \frac{\sum_{i \in K^*\setminus V(f)}\frac{g_i}{d(i)}}{3 + \sum_{i \in K^*\setminus V(f)}\frac{1}{d(i)}} & \leq 0 & & \text{for all $i \in (K_k \setminus K^*) \setminus V(F)$.}
     \end{aligned} \right.
 \end{align} 
  We conclude from the conditions~\eqref{KKT9} that the support of $\lambda$ restricted to $K_k \setminus V(f)$ always consists of the vertices for which $g_i$ lies above a certain threshold value. To find this threshold value, we sort the $g_i$'s in non-increasing order and check all possible candidate sets for $K^*\setminus V(f)$ corresponding to the first $r$ entries in this sorted list. Let $\sigma \colon [n_k] \to K_k$ denote an according sorting permutation, i.e., $\sigma$ is bijective and fulfills $g_{\sigma(1)} \geq g_{\sigma(2)} \geq \dots \geq g_{\sigma(n_k)}$.
  For each candidate set $\{\sigma(1), \ldots, \sigma(p)\} \subseteq K_k \setminus V(f)$, it suffices to check whether $g_{\sigma(p)}$ is strictly larger than the candidate threshold value
  \[\omega(p) \coloneqq \frac{\sum_{j = 1}^p\frac{g_{\sigma(j)}}{d(\sigma(j))}}{3 + \sum_{j = 1}^p\frac{1}{d(\sigma(j))}}.\] If $p^*$ is the largest index for which this holds, then this candidate set equals $K^*\setminus V(f)$. The existence of such a $p^*$ is guaranteed by the existence of a solution to the projection problem~\eqref{eq:optim_problem}.

 Finally, we need to address the optimality conditions for all $i \in K_k \cap V(f)$.
In case~$i \in K^* \cap V(f)$, we have $\lambda_i > 0$, and due to complementary slackness~\eqref{KKT4} it holds that
$$
	0 = \sum_{e \in \delta(i)\setminus\{f\}} z_e = \sum_{e \in \delta(i)\setminus\{f\}} \Big(a_e + \frac 1 2 \lambda_i \Big),~~\text{or equivalently, }~~\lambda_i = -\frac{2}{d(i)-1} \sum_{e \in \delta(i)\setminus\{f\}} a_e > 0.
$$
We note here that we may w.l.o.g.~assume that $d(i) >  1$. Namely, if $d(i) = 1$, then the set $\delta(i) \setminus \{f\}$ is empty, hence $\lambda_i$ will not appear anywhere in~\eqref{KKT6}, making this dual variable redundant.

For the case $i \in (K_k \setminus K^*) \cap V(f)$, and hence $\lambda_i = 0$, condition~\eqref{KKT4}
with~\eqref{KKT3} reads as 
$\sum_{e \in \delta(i)\setminus\{f\}} a_e \geq 0$. Combining both cases, we obtain the following optimality conditions for $i \in K_k \cap V(F)$:
$$
\left\{
\begin{aligned}
 \sum_{e \in \delta(i)\setminus\{f\}} a_e & < 0 && \text{for $i \in K^* \cap V(f)$,} \\ 
\sum_{e \in \delta(i)\setminus\{f\}} a_e& \geq 0 && \text{for $i \in (K_k \setminus K^*) \cap V(f)$.}
\end{aligned} \right.
$$
Altogether, the equations~\eqref{KKT6} then imply
 \begin{align*}
     z_e &= \begin{cases} \frac{a_f + a_{m+1} + a_{m+2}}{3} - \omega(p^*) & \text{if $f \in \{e, m+1, m+2\}$,} \\
         a_e + \frac{1}{d(i)}(g_i - \omega(p^*)) & \text{if $e \in \delta(i) \setminus \{f\}$ and $i \in K^*\setminus V(f)$,} \\
         a_e - \frac{1}{d(i)-1} \sum_{e \in \delta(i)\setminus\{f\}} a_e & \text{if $e \in \delta(i) \setminus \{f\}$ and $i\in K_k \cap V(f)$ with $\sum_{e \in \delta(i)\setminus\{f\}} a_e < 0,$}\\
         a_e & \text{otherwise.}
     \end{cases}
 \end{align*}
\end{proof}

 It follows from Lemma~\ref{Lem:projection_RLT} that the projection onto $T^k_f$ involves both a sorting and an enumeration of a list of $n_k$ elements. Hence, the worst-case time complexity is $O(n_k \log n_k)$.
 
In fact, for computational purposes, we are not going to project on~$\mathcal{Y}_{RLT}$ but iteratively add
violated cuts only. For that, we denote by~$\mathcal C \subseteq V \times E$ the set of violated
cuts that we to add to~$\mathcal Y$, where an element~$(i,f)$ represents the cut
$\sum_{e \in \delta(i)}y_{ef} \geq y_f$.
We further define analogously to~$\mathcal{Y}_{RLT}$ the polyhedral set
\[\mathcal Y_{\mathcal C} \coloneqq
\left\{ \Ylarge \in \mathbb{R}^{(m+1)\times (m+1)} \, :\,\, \Ylarge = \begin{pmatrix}
        Y & y \\
        y^\top & 1
    \end{pmatrix},\ \diag(Y) = y,~\sum_{e \in \delta(i)}y_{fe} \geq y_f \quad \forall (i,f) \in \mathcal C  \right\}.\]
The projection follows the same idea as explained above for the projection onto~$\mathcal Y_{RLT}$, but in this case, instead of partitioning the vertex set~$V$ into independent sets, we can partition the
constraints in~$\mathcal C$ for each edge~$f$ separately.
For a fixed~$f$, we partition the vertices occurring together with~$f$ in~$\mathcal C$
into independent sets~$K^f_1,\dots,K^f_{N_f}$.
Note that the number of independent sets~$N_f$ for an edge will probably be way smaller
than the number of colors needed to color the whole graph, which can, in the worst case of a complete graph, be the number of vertices.
Furthermore, as mentioned above, it is possible to project independently over each row~$f \in E$, which allows us to parallelize this step.
Hence, we cluster the cut constraints in
$
    \mathcal C_k = \big\{ (i,f) \in {\mathcal C} :  f \in E,\ i \in K^f_k \big\}
$
for $1 \leq k \leq N_{max}$ with $ N_{max} \coloneqq \max\{N_f : f \in E\}$ and obtain
$\mathcal Y_{\mathcal C} = \mathcal Y \cap \Bigg(\bigcap_{k = 1}^{N_{max}}\mathcal Y_{\mathcal C_k}\Bigg),$
where we can easily project onto~$\mathcal Y_{\mathcal C_k}$ using~\Cref{Lem:projection_RLT}.
A pseudocode for the Cyclic Dykstra projection algorithm to project onto~$\mathcal Y_{\mathcal C}$ can be found in~\Cref{alg:pseudocode_dykstra}.
\begin{algorithm}
    \caption{Dykstra's cyclic projection algorithm to project onto~$\mathcal{Y}_{\mathcal C}$}\label{alg:pseudocode_dykstra}
    \footnotesize
    \vspace*{0.2em}
    \textbf{Input:} matrix $M$, cuts $\mathcal C$, $\varepsilon_{proj}$\\
    \textbf{Output:} the projection $\mathcal{P}_{\mathcal{Y}_{\mathcal C}}(M)$ of $M$ onto $\mathcal{Y}_{\mathcal C}$
    \vspace*{0.2em}
    \begin{algorithmic}[1]
        \State cluster $\mathcal C$ into $\{ \mathcal C_{1}, \dots, \mathcal C_{N_{max}}\}$
        \State initialize $X = M$, $P = \zeros$, $Q_1 = \dots = Q_{N_{max}} = \zeros$
        \Repeat
        \State $X_{old} = X$
        \State $X_{tmp} = X + P$
        \State $X = \mathcal P_{\mathcal Y}(X_{tmp})$
        \State $P = X_{tmp} - X$
        \For{$k = 1, \dots, N_{max}$}
            \State $X_{tmp} = X + Q_{k}$
            \State $X = \mathcal{P}_{\mathcal{Y}_{\mathcal C_k}}(X_{tmp})$
            \State $Q_k = X_{tmp} - X$
        \EndFor
        \Until{$\lVert X_{old} - X \rVert < \varepsilon_{proj}$}
        \State \Return $X$
    \end{algorithmic}
\end{algorithm}
To compute the lower bound~\eqref{eq:facialred-minimization-oneline2} with a PRSM algorithm, we first compute the DNN bound~\eqref{eq:facialred-minimization-oneline}
with the PRSM, as explained in the previous subsection. Then, we separate violated cuts from the current solution and add the~\texttt{ncutsmax} most violated ones to~$\mathcal C$. We then proceed to compute~\eqref{eq:facialred-minimization-oneline}
with the additional new cuts in~$\mathcal C$ with the PRSM and use the solution from before for a warm-start.
This process of separating and adding new cuts to~$\mathcal C$ in an outer loop is iterated
until one of the stopping criteria is met. \Cref{alg:prsm} provides a pseudocode for the described algorithm.
\begin{algorithm}[h]
\footnotesize
    \caption{PRSM algorithm to compute lower bounds on the QMST}\label{alg:prsm}
    \vspace*{0.2em}
    \textbf{Input:} graph~$G=(V,E)$, cost matrix~$\Qlarge$\\
    \textbf{Output:} (valid) lower bound LB
    \vspace*{0.2em}
    \begin{algorithmic}[1]
        \State initialize $\Ylarge^0$, $S^0$, $\revision{\tau}$, $\gamma_1$, $\gamma_2$, set $\mathcal C = \emptyset$\Comment{cf.~\Cref{sec:numericalresults}}
        \State compute $W$, e.g., apply QR decomposition to $(\begin{matrix} (n-1)\mat I_m & \ones_m \  \end{matrix})^\top$
        \State $k = 0$
        \While{no stopping criteria met}
            \While{no stopping criteria met}
                \State $R^{k+1} = \mathcal P_{\mathcal R}(W^\top(\Ylarge^k + \frac{1}{\revision{\tau}} S^k  )W) $
                \State $S^{\frac{k+1}{2}} = S^k + \gamma_1 \revision{\tau}(\Ylarge^k - WR^{k+1}W^\top)$
                \State $\Ylarge^{k+1} = \mathcal{P}_{\mathcal Y_\mathcal{C}}\big(WR^{k+1}W^\top - \frac{1}{\revision{\tau}}\big(\Qlarge + S^{\frac{k+1}{2}}\big)\big)$
                \State $S^{k+1} =  S^{\frac{k+1}{2}} + \gamma_2\revision{\tau} (\Ylarge^{k+1} - WR^{k+1}W^\top)$
                \State $k = k + 1$
            \EndWhile
            \State compute a valid lower bound~LB from~$S^k$\Comment{cf.~\Cref{subseq:stoppingcrit-validlb}}
            \State separate violated cuts and add the \texttt{ncutsmax} most violated ones to~$\mathcal C$
            \State cluster the cuts in~$\mathcal C$
        \EndWhile
        \State\Return LB
    \end{algorithmic}
\end{algorithm}

\subsection{Stopping criteria and post-processing of the PRSM algorithm}\label{subseq:stoppingcrit-validlb}

In this subsection, we briefly discuss the stopping criteria and the post-processing phase of our
PRSM algorithm.
\paragraph{Stopping criteria} 
We use several criteria to decide when to stop the inner and outer iterations
of \Cref{alg:prsm}.
The main stopping criteria for the inner while loop is when the primal and dual errors satisfy
\[
    \max \Bigg\{  \frac{\big\lVert \Ylarge^{k+1} - WR^{k+1}W^\top \big\rVert_F}{1 + \big\lVert \Ylarge^{k+1} \big\rVert_F},\ \revision{\tau} \frac{\big\lVert W^\top\big(\Ylarge^{k} - \Ylarge^{k+1}\big)W\big\rVert_F}{1 + \big\lVert S^{k+1}\big\rVert_F} \Bigg\} \leq \varepsilon_{PRSM},
\]
cf.~\citep{Boyd2011}.
We further stop the inner iterations when the maximum number of total PRSM iterations or a time limit
is reached. In that case, we compute a valid dual bound as described below,  and stop the algorithm.

For the outer loop, we have the following possible stopping criteria. 
If an upper bound is known, the algorithm stops as soon as the obtained valid lower bound closes the gap.
We further stop the algorithm if the number of new violated cuts found is below a certain threshold~\texttt{ncutsmin}. If the improvement of the valid lower bound compared to the valid lower bound of the previous outer iteration is smaller than~\texttt{epslbimprov}, we stop the algorithm as well.
And finally, we stop after a maximum of~\texttt{noutermax} outer iterations.

\paragraph{Valid lower bound} 
The value obtained as an output of \Cref{alg:prsm} does not necessarily provide a lower bound for the problem, as the convergence of the  PRSM is typically not monotonic, and one stops the algorithm earlier. Therefore, it is necessary to perform a postprocessing procedure to obtain a valid lower bound.
We apply the approach presented in~\citep{Li2021}.
 The safe lower bound derived by this method is then given by
\[\text{lb}(S^{\text{out}}) =
	\min_{\Ylarge \in \mathcal Y_{\mathcal C}}\langle \Qlarge + S^{\text{out}}, \Ylarge \rangle
	- n \lambda_{\max}(W^\top S^{\text{out}} W),\]
 where $S^{\text{out}}$ denotes the dual matrix variable resulting from (an early stop of) the PRSM. The computation of this lower bound boils down to computing the largest eigenvalue and solving a linear program.
Similarly, one can obtain a valid lower bound from the PRSM algorithm that solves \eqref{eq:facialred-minimization-oneline}, by replacing $\mathcal Y_{\mathcal C}$ with $\mathcal Y$, see \eqref{setY}, in the above expression. 

\section{{Computational} results}\label{sec:numericalresults}

We implemented\footnote{The code can be found on \url{https://github.com/melaniesi/QMST.jl} and as ancillary files on the arXiv page of this paper.} our algorithm in Julia~\citep{bezanson2017julia} version~1.10.0.
For solving the linear program to compute a valid lower bound, we are using the solver HiGHS~\citep{highs} with the modeling language JuMP~\citep{Lubin2023}.
The projection onto~$\mathcal C_k$ is multithreaded\revision{, as described in Section~\ref{subsect:cuttingplane}}.
\revision{For computing the eigendecomposition to perform the projection onto $\mathcal R$, see~\eqref{setR}, we use the Julia function \texttt{eigen}. As we use the Intel Math Kernel Library, this function is executed multithreaded.}
All computations were carried out on an AMD EPYC 7343 with 16 cores with 4.00GHz and 1024GB RAM,
operated under Debian GNU/Linux~11.
\paragraph{Parameter setting}
We initialize the matrices, penalty parameters, and step lengths as follows.
 As starting values for the matrices, we choose  $S^0 = \zeros$ and
\[  \Ylarge^0 = \left (\begin{smallmatrix}
            \frac{(n-1)}{m} \mat I + \frac{(n-1)(n-2)}{m(m-1)}(\mat J - \mat I) & ~\frac{(n-1)}{m} \ones\\[1.5ex]
            \frac{(n-1)}{m} \ones^\top & 1
              \end{smallmatrix} \right )\]
Based on the results of {computational} tests, we have determined the values for the
penalty parameter~$\revision{\tau}$ and step lengths. We set the step length parameters to $\gamma_1 = 0.9$, $\gamma_2 = 1$.
For the penalty parameter, let $q_{\max} \coloneqq \max \set{\tr(Q), \lVert Q \rVert_F}$ and 
$q_{\min} \coloneqq \min \set{\tr(Q), \lVert Q \rVert_F}$, we then set
\[\revision{\tau} = \begin{cases}
    \sqrt{\frac{q_{\min}}{m+1} \lVert Q \rVert_F} & \text{if } \frac{q_{\max}}{q_{\min}} < 1.2, \\
    \sqrt{\frac{q_{\max}}{q_{\min}} \lVert Q \rVert_F} & \text{else.}
\end{cases}\]
We run our algorithm for all instances with~$\varepsilon_{PRSM} = 10^{-4}$ and the parameter~$\varepsilon_{proj}$ is set to~$10^{-5}$.
Violated cuts are considered if the violation is greater than~$10^{-3}$ and after each outer iteration,
the~\texttt{ncutsmax =~$m$} most violated cuts are added. 
No further cuts are added if the improvement of the lower bound is smaller than~\texttt{epslbimprov =~$10^{-3}$} or the number of new violated cuts found is less than~\texttt{ncutsmin = 10}.
The maximum wall-clock time for running our algorithm is set to~3~hours per instance, and the
maximum number of total iterations is set to~$10\,000$.
We set the number of maximum outer iterations to~\texttt{noutermax = 10}.

\paragraph{Benchmark instances}
We test our algorithm on the following three benchmark sets.
The first benchmark set OP was introduced by~\citet{ONCAN20101762}.
The benchmark set consists of 3 different classes, each consisting of 160 instances on complete graphs: the OPsym, OPvsym and OPesym instances.
The OPsym instances have diagonal entries chosen uniformly from~$[100]$, and the off-diagonal values are uniformly distributed at random in~$[20]$.
For instances in the class OPvsym, the diagonal values are uniformly distributed in~$[10\,000]$, and the off-diagonal values $Q_{\{i,j\},\{k,l\}}$ are computed as $w(i) w(j) w(k) w(l)$, where $w \colon V \to [10]$ assigns to each vertex in the graph a uniformly distributed weight at random in~$[10]$. The cost matrix for instances of the
type OPesym is constructed in the following way. First, the vertex coordinates are randomly chosen in the box~$[0,100]\times [0,100]$, and the edges are represented as straight lines connecting vertices. The edge cost~$Q_{ee}$ is then set as the length of the edge~$e$, and the interaction cost between two edges~$e$ and~$f$ is computed as the
Euclidean distance between the midpoints of~$e$ and~$f$.
For each of those test sets, they randomly generated 10 instances each for $n \in \{6, 7, \dots, 17, 18\} \cup \{20, 30, 50\}$.  We do not include the benchmark instances of type OPesym and~$n=20$ in our study, as we were unable to locate the correct instances\footnote{
In the benchmark set~\url{https://data.mendeley.com/datasets/cmnh9xc6wb/1}, the instances indicated as type OPesym for $n=20$ are the OPvsym  for~$n=6$.}.

The second family of benchmark instances CP was introduced by~\citet{cordone2008}. The benchmark set consists of 108 instances divided into~4 classes, specifying the sets from which the diagonal and off-diagonal values of the cost matrix are chosen uniformly at random. For each pair of the number of vertices~$n  \in \{10,15,20,25,30,35,40,45,50\}$, density~$d \in \{33\%, 67\%, 100\%\}$  and class,
one random graph was generated. The values of the cost matrix~$Q$ are uniformly distributed on the sets as listed below.
\begin{table}[H]
    \centering
    \footnotesize
    \begin{tabular}{l|cccc}
        class & CP1 & CP2 & CP3 & CP4\\\hline
        diagonal values & [10] & [10] & [100] & [100]\\
        off-diagonal values & [10] & [100] & [10] & [100]
    \end{tabular}
\end{table}

The last benchmark set SV was introduced by Sotirov and Verch\'ere in their recent paper~\citep{Sotirov2024}.
It consists of~24 instances, with one random graph for each pair of $n \in \{10,12,14,16,18,20,25,30\}$ and $d \in \{33\%, 67\%, 100\%\}$.
They constructed the cost matrices in such a way that
for a given maximum cost for the diagonal entries,
and a maximum cost for the off-diagonal entries, 10\% of
the edges have high interaction costs with each other (between 90 and 100\%
of the maximum off-diagonal cost)  and low interaction costs with the rest (between 20 and 40\% of the maximum
off-diagonal cost). The other 90\% of edges have an interaction cost of between 50 and 70\% of the maximum off-diagonal cost with each other. The diagonal entries are chosen to be between 0 and 20\% of the maximum diagonal cost.

\paragraph{Bounds from the literature} 
We compare our {computational} results to lower bounds from \citep{GUIMARAES202046,PEREIRA2015149,Sotirov2024}. The upper bounds \revision{on the SV instances are taken from~\cite{Sotirov2024}, for all other instances, the upper bounds are taken from a collection of best known results in the literature\footnote{\url{https://homes.di.unimi.it/cordone/research/qmst.html}}.}

The bounds from~\citep{GUIMARAES202046}, called  LAGN and LAGP, are used in the to-date
best exact algorithm for the QMSTP.
Those bounds are obtained from two different ways of dualizing
an SDP relaxation of QMSTP. For LAGN, the semidefiniteness constraint
is dualized, and a subgradient method is used to compute the optimum.
Whereas for computing LAGP, there is no semidefiniteness constraint
present, but a semi-infinite reformulation together with polyhedral
cutting planes is solved using a bundle method.

The lower bounds VS1 and VS2 were introduced by~\citet{Sotirov2024}.
These lower bounds are based on an extended formulation of the minimum quadratic
spanning tree problem and are strengthened by facet defining inequalities of the
Boolean Quadric polytope. The lower bound  VS2 is stronger than VS1. 

\citet{PEREIRA2015149} solved several benchmark problems of sizes
up to~50~vertices  using a RLT based relaxation RLT1.
RLT1 is an incomplete first-level RLT relaxation and is computed by
dualizing the symmetry constraint, applying the GL procedure, and using a subgradient algorithm.
Another RLT based bound among the strongest relaxations in the literature is RLT2, presented in~\citep{Rostami2015LowerBF}. \citet{Rostami2015LowerBF} use a dual-ascent procedure for computing
their relaxation based on the second-level of RLT.

\paragraph{Computational results}
We first present a comparison to the results from~\citep{GUIMARAES202046}, where the authors also compute SDP bounds.
Their computations were carried out on a machine with 32 GB RAM and two E5645 Intel Xeon processors, with six 2.40GHz cores each.

 The structure of \Cref{tab:comparison-bbn} is analogous to Table~4 in~\citep{GUIMARAES202046} and reads as follows. The rows are grouped into~3~blocks, each reporting the results averaged over all CP instances with the same property as specified in the first column of the table.
The first block of rows averages over instances of the same size, the second averages the results over the densities of the graphs, and the last block averages over the different classes of the CP instances.
In the second column of~\Cref{tab:comparison-bbn}, we report the average gap obtained by the valid lower bound  obtained with our PRSM algorithm when stopping after the first outer iteration, cf.~\eqref{eq:facialred-minimization-oneline}.
We compute the relative gap between that lower bound ($\text{LB}_{DNN}$) and the best known upper bound (UB) from the literature
using $100(UB- \text{LB}_{DNN} )/UB$.
 We remark here that the same gap was calculated by~\citet{GUIMARAES202046}.\footnote{There was a typo in that paper that claims differently, but our statement can be easily verified.}
In the third column, we report the average wall clock time in seconds needed to compute this lower bound.
In column~4, we report the average gap obtained by the bound returned by~\Cref{alg:prsm}, cf.~\eqref{eq:facialred-minimization-oneline2}, and in column~5, the average time needed to compute this bound.
In the sixth and seventh column, we list the average gaps and computation times for the bound LAGN of~\citep{GUIMARAES202046}, which is used in the best up-to-date exact algorithm for the QMSTP.
 The average gaps and computation times of  LAGP, the second lower bound introduced in~\citep{GUIMARAES202046},
are given in the last two columns of \Cref{tab:comparison-bbn}.

The results in \Cref{tab:comparison-bbn} show that for the CP instances, our lower bounds 
are, on average, significantly stronger than the SDP bounds LAGN and LAGP.
Except for the instances with $n \in \{10,15\}$, the average computation times for solving our  relaxations are smaller than those reported for computing SDP bounds LAGN and LAGP. The average time to compute the DNN + CUTS bound, that is~\eqref{eq:facialred-minimization-oneline2}, over all CP instances is~$51$~seconds, compared to $1\,360$ and $5\,652$ seconds for LAGN and LAGP, respectively. More significant difference in the computation times and relative gaps can be seen for larger instances.
\revision{Even if the machine used for this study is stronger than the one used for computing LAGN and LAGP, it is evident that with an increasing size $n$, the time for computing LAGN and LAGP grows rapidly.}
One can also observe that the less dense the instances are, the smaller the average relative gap.
Furthermore, the effect of adding cuts is more significant for sparse graphs than for dense graphs.
~\citet[Table 4]{GUIMARAES202046} compare their bounds to RLT1~\citep{PEREIRA2015149},
which can be computed approximately three times faster than LAGN but yields much weaker bounds.
The average gap of bound~RLT2~\citep{Rostami2015LowerBF} over all instances of size~$n \leq 35$ for each of the four CP classes is at least three times larger than our reported average gaps for~\eqref{eq:facialred-minimization-oneline}. 
Overall, \Cref{tab:comparison-bbn} shows that, especially for larger CP instances, our bounds are significantly stronger and faster to compute than any other bounds.

The latter conclusion at first might seem counterintuitive, as our relaxations~\eqref{eq:sdp1} and~\eqref{eq:sdp2} are theoretically weaker than the bound from~\cite{GUIMARAES202046}, where the algorithms LAGN and LAGP are based on. However, it follows from Table~\ref{tab:comparison-bbn} that this is not the case in practice. We believe that the difference might be caused by the fact that the algorithms LAGN and LAGP are stopped before reaching their theoretical optimal solution. This hypothesis is strengthened by the fact that the reported gaps of LAGN and LAGP are different, although they result from the same theoretical SDP bound.
 Since our approach is able to solve the relaxation up to high precision, our bounds turn out to be stronger than the ones reported by~\cite{GUIMARAES202046}. Finally, due to our postprocessing step explained in Section~\ref{subseq:stoppingcrit-validlb}, we have full certainty that our approach provides a valid lower bound even if the algorithm is stopped at a low precision.
\medskip

In the \Cref{tab:CP1,tab:CP2,tab:CP3,tab:CP4,tab:SV} we report the {computational} results for
all benchmark instances of the test sets CP and SV.
The first four columns give details about the instance as the number of vertices, the edge density,
the number of edges and an upper bound on the QMST.
The next three columns report the valid lower bound~\eqref{eq:facialred-minimization-oneline} obtained after the first outer loop of our PRSM algorithm, the relative gap to the upper bound  $100(UB-\text{LB}_{DNN})/UB$, and the wall clock time in seconds needed to compute that bound.
The last six columns outline the {computational} results of our algorithm to compute~\eqref{eq:facialred-minimization-oneline2}.
 In columns 8 to 10, we provide the valid lower bound returned by our algorithm, the relative gap, and the wall clock time needed to compute the lower bound.
The next two columns list the total number of iterations and the total number of cuts added.
 In the last column, we report the relative gap closed by adding the RLT-type cuts to the DNN relaxation~\eqref{eq:facialred-minimization-oneline}. 
 This performance measurement is computed as
$100 (\text{LB}_{DNN+CUTS} - \text{LB}_{DNN})/(\text{UB} - \text{LB}_{DNN}),$
where LB\textsubscript{DNN} refers to the lower bound~\eqref{eq:facialred-minimization-oneline} reported in column~5 and LB\textsubscript{DNN+CUTS} is the lower bound~\eqref{eq:facialred-minimization-oneline2} reported in column~8 in each table.
This metric gives information on how much the gap to the upper bound was improved.
\revision{For a quick overview, we print the average gaps at the end of each table.}

\Cref{tab:CP1,tab:CP2,tab:CP3,tab:CP4} show that especially for CP instances with $n \geq 30$ vertices and edge density~100\% there were only a few violated cuts found.
Hence, the relative improvement of the DNN relaxation by adding those cuts was only marginal. One can further observe that the improvement of the relative gap and the relative gap closed, is better for smaller instances.
For larger instances, adding  cuts such as the RLT-type of the cut-set constraints
for subsets~$S$ of size~2 and larger, might further improve the DNN bounds.

\Cref{tab:SV} presents the results of our algorithm for the benchmark set~SV introduced by~\citet{Sotirov2024}. To the best of our knowledge, there are no results on LAGN, LAGP, and RLT2 for this benchmark set.
The by far best lower bound up to date for the SV instance set was VS2. 
Our DNN relaxation bound without cuts outperforms VS2 for all instances, with the number
of edges $m \geq 45$, except for the instance with $n = 12$ and $d = 67\%$.
Both our relaxations yield a relative gap of less than~1\%. The relative gap of VS2 ranges between~0 and~16.4\%.
The maximum runtime to compute the DNN bound for these instances is less than~5~seconds,
whereas computation time for bound VS2 of~$n = 30$ and~$d = 100\%$ was reported to be~45 minutes.
Computing the DNN bound with cuts is faster than the reported time to compute VS2 for all
instances with more than~80 edges.

\Cref{tab:OPsym,tab:OPesym,tab:OPvsym} read similarly to the tables for the CP and SV benchmark sets but the results are averaged over all instances of the same size.
Again, to the best of our knowledge, we are not aware of any detailed and complete results for LAGN and LAGP on the OP benchmark set.

\Cref{tab:OPsym} reports the results obtained for the benchmark set OPsym.
The lower bound~\eqref{eq:facialred-minimization-oneline2} with cuts 
outperforms VS2 for~$n \geq 10$, and RLT2 for~$n \geq 8$ with the
exception of~$n = 18$, where the average relative gap for RLT2 is reported to be~33\%
and is~33.41\% for the DNN bound with cuts. For~$n = 50$, no bounds were reported.
One can observe that the absolute improvement by adding RLT cuts to~\eqref{eq:facialred-minimization-oneline}
for $n \geq 9$ is approximately~20.

\Cref{tab:OPesym} shows that for the benchmark set OPesym adding the RLT-type
cuts to~\eqref{eq:facialred-minimization-oneline} yields a substantial improvement of the relative gap.
The DNN lower bound with cuts yields better bounds compared to VS2 but is
clearly dominated by RLT1, giving an average relative gap between~$0.2\%$ and~$1.7\%$
for instances with~$n \leq 30$.

\citet{Sotirov2024} report that the relative gap of the VS1 lower bound
is less than or equal~0.2\% for all instances of the class OPvsym.
Although, on average, not many violated cuts to be added were found, the averaged relative bound closed is above~49\% for all instances except that with~$n \in \{6,7\}$, where on average only~0.5~violated cuts were found. Considering the instances with~$n \geq 11$, the average relative bound closed is even above~80\%.

The time limit of~3~hours was reached by all instances from OPesym and OPvsym of size~$n = 50$ and almost all of those instances of size~$n = 30$.
The higher computational costs for those two classes of benchmark instances can be explained,
among other things, by the high number of clusters~$N_{\max}$, cf.~\Cref{subsect:cuttingplane}.
The number of clusters has a direct effect on the computation time of Dykstra's algorithm, which
accounts for a substantial part of the overall computation time.
The average number of clusters needed for the OPvsym and OPesym instances are~6.43 and~6.38,
whereas the average over all other benchmark instances is~3.26. 
Note that for those two classes of instances, added RTL-type constraints significantly improve lower bounds.
Additionally, as for the CP3 instances, one can observe the higher number of iterations until convergence of the algorithm compared to other classes in our benchmark sets.

\section{Conclusion}

This paper provides {a mixed-integer semidefinite programming formulation} for the quadratic minimum spanning tree problem.
{This formulation} includes only one connectivity constraint, which is a linear matrix inequality based on the algebraic connectivity of trees.
By exploiting {the MISDP formulation}, we derive a DNN relaxation for the QMSTP.
We also derive the cut-set and  RLT-type constraints as Chv\'atal-Gomory cuts of the MISDP by applying a CG procedure for mixed integer conic programming.
The RLT-type constraints are  added to the DNN relaxation, resulting in a strengthened DNN relaxation.
An iterative cutting plane Peaceman-Rachford splitting method is designed to compute
the DNN relaxation with the RLT-type  constraints of the QMSTP efficiently.

The computational experiments on the benchmark instances from the literature
demonstrate that our bounds significantly outperform existing bounds in quality
and computation time.
While other approaches struggled to compute bounds for larger instances, we compute strong bounds in short time. 

Given these results, incorporating our new bounds in a branch-and-bound algorithm would be the obvious next step for further research. Another topic for future research would be to incorporate additional RLT-type cut-set constraints to further strengthen the DNN relaxation.

\vspace{0.5cm}
{

\begingroup
\setlength{\tabcolsep}{7pt} 
\renewcommand{\arraystretch}{1.1}
\begin{table}[H]
\centering
\footnotesize
\begin{tabular}{lrrrr|rrrr}
        \toprule
       & \multicolumn{4}{c}{This study} & \multicolumn{4}{c}{\citet{GUIMARAES202046}}\\
       \cmidrule(r){2-5} \cmidrule(l){6-9}
       & \multicolumn{2}{c}{DNN} & \multicolumn{2}{c}{DNN + CUTS}& \multicolumn{2}{c}{LAGN } & \multicolumn{2}{c}{LAGP}\\
       \cmidrule(r){2-3} \cmidrule(lr){4-5} \cmidrule(lr){6-7} \cmidrule(l){8-9}
       & \multicolumn{1}{l}{gap (\%)}      & \multicolumn{1}{l}{time (s)}     & \multicolumn{1}{l}{gap (\%)}        & \multicolumn{1}{l}{time (s)}   & \multicolumn{1}{l}{gap (\%)}       & \multicolumn{1}{l}{time (s)}   & \multicolumn{1}{l}{gap (\%)}  & \multicolumn{1}{l}{time (s)}     \\[0.2em] \hline
\csvreader[head to column names, late after line = \\, /csv/separator=semicolon]{result-CP-navg.csv}{}
        {\text & \dnngapp & \dnntime & \lbgapp & \lbtime & \lagngap & \lagntime & \lagpgap &\lagptime}
\hline
\csvreader[head to column names, late after line = \\, /csv/separator=semicolon]{result-CP-davg.csv}{}
        {\text & \dnngapp & \dnntime & \lbgapp & \lbtime & \lagngap & \lagntime & \lagpgap &\lagptime}
\hline
\csvreader[head to column names, late after line = \\, /csv/separator=semicolon]{result-CP-classavg.csv}{}
        {\text & \dnngapp & \dnntime & \lbgapp & \lbtime & \lagngap & \lagntime & \lagpgap &\lagptime}
\bottomrule
\end{tabular}
\caption{Comparison to averaged results on lower bounds for CP instances. \revision{Note that the experiments were carried out on different machines, see Section~\ref{sec:numericalresults} for the details.}}\label{tab:comparison-bbn}
\end{table}
\endgroup
}

{ \footnotesize
\begingroup
\setlength{\tabcolsep}{4pt} 
\renewcommand{\arraystretch}{1.1} 
    \begin{longtable}{lrrr|rrr|rrrrrr}
    \toprule
    \multicolumn{4}{c}{Instance} & \multicolumn{3}{c}{DNN} & \multicolumn{6}{c}{DNN + CUTS}\\
    \cmidrule(r){1-4} \cmidrule(lr){5-7} \cmidrule(l){8-13}
         \multicolumn{1}{l}{$n$}& \multicolumn{1}{l}{$d$ (\%)} & \multicolumn{1}{l}{$m$} & \multicolumn{1}{l}{UB} & \multicolumn{1}{l}{LB}  & \multicolumn{1}{l}{gap (\%)} & \multicolumn{1}{l}{time (s)} & \multicolumn{1}{l}{LB}  & \multicolumn{1}{l}{gap (\%)} & \multicolumn{1}{l}{time (s)} & \multicolumn{1}{l}{iterations} & \multicolumn{1}{l}{cuts} & \multicolumn{1}{l}{closed (\%)} \\\midrule \endfirsthead
        \multicolumn{1}{l}{$n$}& \multicolumn{1}{l}{$d$ (\%)} & \multicolumn{1}{l}{$m$} & \multicolumn{1}{l}{UB} & \multicolumn{1}{l}{LB}  & \multicolumn{1}{l}{gap (\%)} & \multicolumn{1}{l}{time (s)} & \multicolumn{1}{l}{LB}  & \multicolumn{1}{l}{gap (\%)} & \multicolumn{1}{l}{time (s)} & \multicolumn{1}{l}{iterations} & \multicolumn{1}{l}{cuts} & \multicolumn{1}{l}{closed (\%)}\\\midrule\endhead
        \bottomrule\\
        \caption{Results for CP1.}\label{tab:CP1}\endlastfoot
        \csvreader[head to column names, late after line = \\, /csv/separator=semicolon]{result-CP1.csv}{n=\size,m=\edges,d=\density}
        {\size & \density & \edges & \ub & \dnn & \dnngapp & \dnntime & \lb  & \lbgapp & \time & \iterations & \ncuts & \relgapclosed}
    \cmidrule(r){1-4} \cmidrule(lr){5-7} \cmidrule(l){8-13}        
    \multicolumn{4}{l|}{\revision{Average}} & & \revision{9.08} & & & \revision{8.51} & & & & \revision{13.07}
    \end{longtable}
\endgroup
}
    \vspace{-0.3cm}
    { \footnotesize
\begingroup
\setlength{\tabcolsep}{4pt} 
\renewcommand{\arraystretch}{1.1} 
        \begin{longtable}{lrrr|rrr|rrrrrr}
    \toprule
    \multicolumn{4}{c}{Instance} & \multicolumn{3}{c}{DNN} & \multicolumn{6}{c}{DNN + CUTS}\\
    \cmidrule(r){1-4} \cmidrule(lr){5-7} \cmidrule(l){8-13}
         \multicolumn{1}{l}{$n$}& \multicolumn{1}{l}{$d$ (\%)} & \multicolumn{1}{l}{$m$} & \multicolumn{1}{l}{UB} & \multicolumn{1}{l}{LB}  & \multicolumn{1}{l}{gap (\%)} & \multicolumn{1}{l}{time (s)} & \multicolumn{1}{l}{LB}  & \multicolumn{1}{l}{gap (\%)} & \multicolumn{1}{l}{time (s)} & \multicolumn{1}{l}{iterations} & \multicolumn{1}{l}{cuts} & \multicolumn{1}{l}{closed (\%)} \\\midrule \endfirsthead
        \multicolumn{1}{l}{$n$}& \multicolumn{1}{l}{$d$ (\%)} & \multicolumn{1}{l}{$m$} & \multicolumn{1}{l}{UB} & \multicolumn{1}{l}{LB}  & \multicolumn{1}{l}{gap (\%)} & \multicolumn{1}{l}{time (s)} & \multicolumn{1}{l}{LB}  & \multicolumn{1}{l}{gap (\%)} & \multicolumn{1}{l}{time (s)} & \multicolumn{1}{l}{iterations} & \multicolumn{1}{l}{cuts} & \multicolumn{1}{l}{closed (\%)}\\\midrule\endhead
        \bottomrule\\
        \caption{Results for CP2.}\label{tab:CP2}\endlastfoot
        \csvreader[head to column names, late after line = \\, /csv/separator=semicolon]{result-CP2.csv}{n=\size,m=\edges,d=\density}
        {\size & \density & \edges & \ub & \dnn & \dnngapp & \dnntime & \lb  & \lbgapp & \time & \iterations & \ncuts & \relgapclosed}
    \cmidrule(r){1-4} \cmidrule(lr){5-7} \cmidrule(l){8-13}        
    \multicolumn{4}{l|}{\revision{Average}} & & \revision{10.96} & & & \revision{10.23} & & & & \revision{12.89}

    \end{longtable}
\endgroup
}

{\footnotesize
\begingroup
\setlength{\tabcolsep}{4pt} 
\renewcommand{\arraystretch}{1.1}
        \begin{longtable}{lrrr|rrr|rrrrrr}
    \toprule
    \multicolumn{4}{c}{Instance} & \multicolumn{3}{c}{DNN} & \multicolumn{6}{c}{DNN + CUTS}\\
    \cmidrule(r){1-4} \cmidrule(lr){5-7} \cmidrule(l){8-13}
         \multicolumn{1}{l}{$n$}& \multicolumn{1}{l}{$d$ (\%)} & \multicolumn{1}{l}{$m$} & \multicolumn{1}{l}{UB} & \multicolumn{1}{l}{LB}  & \multicolumn{1}{l}{gap (\%)} & \multicolumn{1}{l}{time (s)} & \multicolumn{1}{l}{LB}  & \multicolumn{1}{l}{gap (\%)} & \multicolumn{1}{l}{time (s)} & \multicolumn{1}{l}{iterations} & \multicolumn{1}{l}{cuts} & \multicolumn{1}{l}{closed (\%)} \\\midrule \endfirsthead
        \multicolumn{1}{l}{$n$}& \multicolumn{1}{l}{$d$ (\%)} & \multicolumn{1}{l}{$m$} & \multicolumn{1}{l}{UB} & \multicolumn{1}{l}{LB}  & \multicolumn{1}{l}{gap (\%)} & \multicolumn{1}{l}{time (s)} & \multicolumn{1}{l}{LB}  & \multicolumn{1}{l}{gap (\%)} & \multicolumn{1}{l}{time (s)} & \multicolumn{1}{l}{iterations} & \multicolumn{1}{l}{cuts} & \multicolumn{1}{l}{closed (\%)}\\\midrule\endhead
        \bottomrule\\
        \caption{Results for CP3.}\label{tab:CP3}\endlastfoot
        \csvreader[head to column names, late after line = \\, /csv/separator=semicolon]{result-CP3.csv}{n=\size,m=\edges,d=\density}
        {\size & \density & \edges & \ub & \dnn & \dnngapp & \dnntime & \lb  & \lbgapp & \time & \iterations & \ncuts & \relgapclosed}
    \cmidrule(r){1-4} \cmidrule(lr){5-7} \cmidrule(l){8-13}        
    \multicolumn{4}{l|}{\revision{Average}} & & \revision{4.35} & & & \revision{3.63} & & & & \revision{22.26} 
    \end{longtable}
\endgroup
}

    {\footnotesize 
\begingroup
\setlength{\tabcolsep}{4pt} 
\renewcommand{\arraystretch}{1.1} 
        \begin{longtable}{lrrr|rrr|rrrrrr}
    \toprule
    \multicolumn{4}{c}{Instance} & \multicolumn{3}{c}{DNN} & \multicolumn{6}{c}{DNN + CUTS}\\
    \cmidrule(r){1-4} \cmidrule(lr){5-7} \cmidrule(l){8-13}
         \multicolumn{1}{l}{$n$}& \multicolumn{1}{l}{$d$ (\%)} & \multicolumn{1}{l}{$m$} & \multicolumn{1}{l}{UB} & \multicolumn{1}{l}{LB}  & \multicolumn{1}{l}{gap (\%)} & \multicolumn{1}{l}{time (s)} & \multicolumn{1}{l}{LB}  & \multicolumn{1}{l}{gap (\%)} & \multicolumn{1}{l}{time (s)} & \multicolumn{1}{l}{iterations} & \multicolumn{1}{l}{cuts} & \multicolumn{1}{l}{closed (\%)} \\\midrule \endfirsthead
        \multicolumn{1}{l}{$n$}& \multicolumn{1}{l}{$d$ (\%)} & \multicolumn{1}{l}{$m$} & \multicolumn{1}{l}{UB} & \multicolumn{1}{l}{LB}  & \multicolumn{1}{l}{gap (\%)} & \multicolumn{1}{l}{time (s)} & \multicolumn{1}{l}{LB}  & \multicolumn{1}{l}{gap (\%)} & \multicolumn{1}{l}{time (s)} & \multicolumn{1}{l}{iterations} & \multicolumn{1}{l}{cuts} & \multicolumn{1}{l}{closed (\%)}\\\midrule\endhead
        \bottomrule\\
        \caption{Results for CP4.}\label{tab:CP4}\endlastfoot
        \csvreader[head to column names, late after line = \\, /csv/separator=semicolon]{result-CP4.csv}{n=\size,m=\edges,d=\density}
        {\size & \density & \edges & \ub & \dnn & \dnngapp & \dnntime & \lb  & \lbgapp & \time & \iterations & \ncuts & \relgapclosed}
    \cmidrule(r){1-4} \cmidrule(lr){5-7} \cmidrule(l){8-13}        
    \multicolumn{4}{l|}{\revision{Average}} & & \revision{10.47} & & & \revision{9.84} & & & & \revision{12.65}
    \end{longtable}
\endgroup
}

    {\footnotesize
\begingroup
\setlength{\tabcolsep}{4pt} 
\renewcommand{\arraystretch}{1.1} 
    \begin{longtable}{lrrr|rrr|rrrrrr}
    \toprule
    \multicolumn{4}{c}{Instance} & \multicolumn{3}{c}{DNN} & \multicolumn{6}{c}{DNN + CUTS}\\
    \cmidrule(r){1-4} \cmidrule(lr){5-7} \cmidrule(l){8-13}
         \multicolumn{1}{l}{$n$}& \multicolumn{1}{l}{$d$ (\%)} & \multicolumn{1}{l}{$m$} & \multicolumn{1}{l}{UB} & \multicolumn{1}{l}{LB}  & \multicolumn{1}{l}{gap (\%)} & \multicolumn{1}{l}{time (s)} & \multicolumn{1}{l}{LB}  & \multicolumn{1}{l}{gap (\%)} & \multicolumn{1}{l}{time (s)} & \multicolumn{1}{l}{iterations} & \multicolumn{1}{l}{cuts} & \multicolumn{1}{l}{closed (\%)}  \\\midrule \endfirsthead
        \multicolumn{1}{l}{$n$}& \multicolumn{1}{l}{$d$ (\%)} & \multicolumn{1}{l}{$m$} & \multicolumn{1}{l}{UB} & \multicolumn{1}{l}{LB}  & \multicolumn{1}{l}{gap (\%)} & \multicolumn{1}{l}{time (s)} & \multicolumn{1}{l}{LB}  & \multicolumn{1}{l}{gap (\%)} & \multicolumn{1}{l}{time (s)} & \multicolumn{1}{l}{iterations} & \multicolumn{1}{l}{cuts} & \multicolumn{1}{l}{closed (\%)}\\\midrule\endhead
        \bottomrule\\
        \caption{Results for SV instances.}\label{tab:SV}\endlastfoot
        \csvreader[head to column names, late after line = \\, /csv/separator=semicolon]{result-SV.csv}{n=\size,m=\edges,d=\density}
        {\size & \density & \edges & \ub & \dnn & \dnngapp & \dnntime & \lb  & \lbgapp & \time & \iterations & \ncuts & \relgapclosed}
    \cmidrule(r){1-4} \cmidrule(lr){5-7} \cmidrule(l){8-13}        
    \multicolumn{4}{l|}{\revision{Average}} & & \revision{0.49} & & & \revision{0.36} & & & & \revision{27.96}
    \end{longtable}
\endgroup
}

{\footnotesize
\begingroup
\setlength{\tabcolsep}{4pt} 
\renewcommand{\arraystretch}{1.1} 
    \begin{longtable}{lrr|rrr|rrrrrr}
    \toprule
    \multicolumn{3}{c}{Instance} & \multicolumn{3}{c}{DNN} & \multicolumn{6}{c}{DNN + CUTS}\\
    \cmidrule(r){1-3} \cmidrule(lr){4-6} \cmidrule(l){7-12}
         \multicolumn{1}{l}{$n$}& \multicolumn{1}{l}{$m$} & \multicolumn{1}{l}{UB} & \multicolumn{1}{l}{LB}  & \multicolumn{1}{l}{gap (\%)} & \multicolumn{1}{l}{time (s)} & \multicolumn{1}{l}{LB}  & \multicolumn{1}{l}{gap (\%)} & \multicolumn{1}{l}{time (s)} & \multicolumn{1}{l}{iterations} & \multicolumn{1}{l}{cuts} & \multicolumn{1}{l}{closed (\%)}\\\midrule \endfirsthead
        \multicolumn{1}{l}{$n$} & \multicolumn{1}{l}{$m$} & \multicolumn{1}{l}{UB} & \multicolumn{1}{l}{LB}  & \multicolumn{1}{l}{gap (\%)} & \multicolumn{1}{l}{time (s)} & \multicolumn{1}{l}{LB}  & \multicolumn{1}{l}{gap (\%)} & \multicolumn{1}{l}{time (s)} & \multicolumn{1}{l}{iterations} & \multicolumn{1}{l}{cuts} & \multicolumn{1}{l}{closed (\%)} \\\midrule\endhead
        \bottomrule\\
        \caption{Results for OPsym instances.}\label{tab:OPsym}\endlastfoot
        \csvreader[head to column names, late after line = \\, /csv/separator=semicolon]{result-OPsym.csv}{n=\size,m=\edges}
        {\size & \edges & \ub & \dnn & \dnngapp & \dnntime & \lb  & \lbgapp & \time & \iterations & \ncuts & \relgapclosed}
    \cmidrule(r){1-3} \cmidrule(lr){4-6} \cmidrule(l){7-12}        
    \multicolumn{3}{l|}{\revision{Average}} & & \revision{8.01} & & & \revision{6.48} & & & & \revision{31.32}
    \end{longtable}
\endgroup
}
{\footnotesize
\begingroup
\vspace{-0.5cm}
\setlength{\tabcolsep}{4pt} 
\renewcommand{\arraystretch}{1.1} 
        \begin{longtable}{lrr|rrr|rrrrrr}
    \toprule
    \multicolumn{3}{c}{Instance} & \multicolumn{3}{c}{DNN} & \multicolumn{6}{c}{DNN + CUTS}\\
    \cmidrule(r){1-3} \cmidrule(lr){4-6} \cmidrule(l){7-12}
         \multicolumn{1}{l}{$n$}& \multicolumn{1}{l}{$m$} & \multicolumn{1}{l}{UB} & \multicolumn{1}{l}{LB}  & \multicolumn{1}{l}{gap (\%)} & \multicolumn{1}{l}{time (s)} & \multicolumn{1}{l}{LB}  & \multicolumn{1}{l}{gap (\%)} & \multicolumn{1}{l}{time (s)} & \multicolumn{1}{l}{iterations} & \multicolumn{1}{l}{cuts} & \multicolumn{1}{l}{closed (\%)}\\\midrule \endfirsthead
        \multicolumn{1}{l}{$n$} & \multicolumn{1}{l}{$m$} & \multicolumn{1}{l}{UB} & \multicolumn{1}{l}{LB}  & \multicolumn{1}{l}{gap (\%)} & \multicolumn{1}{l}{time (s)} & \multicolumn{1}{l}{LB}  & \multicolumn{1}{l}{gap (\%)} & \multicolumn{1}{l}{time (s)} & \multicolumn{1}{l}{iterations} & \multicolumn{1}{l}{cuts} & \multicolumn{1}{l}{closed (\%)} \\\midrule\endhead
        \bottomrule\\
        \caption{Results for OPesym instances.}\label{tab:OPesym}\endlastfoot
        \csvreader[head to column names, late after line = \\, /csv/separator=semicolon]{result-OPesym.csv}{n=\size,m=\edges}
        {\size & \edges & \UB & \dnn & \dnngapp & \dnntime & \lb  & \lbgapp & \time & \iterations & \ncuts & \relgapclosed}
    \cmidrule(r){1-3} \cmidrule(lr){4-6} \cmidrule(l){7-12}        
    \multicolumn{3}{l|}{\revision{Average}} & & \revision{23.98} & & & \revision{6.99} & & & & \revision{69.10}
    \end{longtable}
\endgroup
}
{\footnotesize
\begingroup
\setlength{\tabcolsep}{4pt}
\renewcommand{\arraystretch}{1.1}
        \begin{longtable}{lrr|rrr|rrrrrr}
    \toprule
    \multicolumn{3}{c}{Instance} & \multicolumn{3}{c}{DNN} & \multicolumn{6}{c}{DNN + CUTS}\\
    \cmidrule(r){1-3} \cmidrule(lr){4-6} \cmidrule(l){7-12}
         \multicolumn{1}{l}{$n$}& \multicolumn{1}{l}{$m$} & \multicolumn{1}{l}{UB} & \multicolumn{1}{l}{LB}  & \multicolumn{1}{l}{gap (\%)} & \multicolumn{1}{l}{time (s)} & \multicolumn{1}{l}{LB}  & \multicolumn{1}{l}{gap (\%)} & \multicolumn{1}{l}{time (s)} & \multicolumn{1}{l}{iterations} & \multicolumn{1}{l}{cuts} & \multicolumn{1}{l}{closed (\%)}\\\midrule \endfirsthead
        \multicolumn{1}{l}{$n$} & \multicolumn{1}{l}{$m$} & \multicolumn{1}{l}{UB} & \multicolumn{1}{l}{LB}  & \multicolumn{1}{l}{gap (\%)} & \multicolumn{1}{l}{time (s)} & \multicolumn{1}{l}{LB}  & \multicolumn{1}{l}{gap (\%)} & \multicolumn{1}{l}{time (s)} & \multicolumn{1}{l}{iterations} & \multicolumn{1}{l}{cuts} & \multicolumn{1}{l}{closed (\%)} \\\midrule\endhead
        \bottomrule\\
        \caption{Results for OPvsym instances.}\label{tab:OPvsym}\endlastfoot
        \csvreader[head to column names, late after line = \\, /csv/separator=semicolon]{result-OPvsym.csv}{n=\size,m=\edges}
        {\size & \edges & \ub & \dnn & \dnngapp & \dnntime & \lb  & \lbgapp & \time & \iterations & \ncuts & \relgapclosed}
    \cmidrule(r){1-3} \cmidrule(lr){4-6} \cmidrule(l){7-12}        
    \multicolumn{3}{l|}{\revision{Average}} & & \revision{16.89} & & & \revision{3.16} & & & & \revision{73.88}
    \end{longtable}
\endgroup
}

\bibliography{QMST} 

\end{document}